\newtheorem{thm}{Theorem}
\newtheorem{cor}[thm]{Corollary}
\newtheorem{lem}[thm]{Lemma}
\newtheorem{prop}[thm]{Proposition}
\newtheorem{example}[thm]{Example}
\newcommand{\cC}{\mathcal{C}}
\newcommand{\cF}{\mathcal{F}}
\newcommand{\cH}{\mathcal{H}}
\newcommand{\cL}{\mathcal{L}}
\newcommand{\cM}{\mathcal{M}}
\newcommand{\cU}{\mathcal{U}}
\newcommand{\N}{\mathbb{N}}
\newcommand{\bP}{\mathbf{P}}
\newcommand{\bE}{\mathbf{E}}
\newcommand{\Sum}{\displaystyle\sum}
\newcommand{\Int}{\displaystyle\int}
\newcommand{\dt}{\mathrm{d}\tau}
\title{Codimension formulae for the intersection of fractal subsets of Cantor spaces}
\author{Casey Donoven and Kenneth Falconer}
\begin{document}
\maketitle
\begin{abstract}
We examine the dimensions of the intersection of a subset $E$ of an $m$-ary Cantor space $\cC^m$ 
with the image of a subset $F$ under a random isometry with respect to a natural metric.
We obtain almost sure upper bounds for the Hausdorff and upper box-counting dimensions of the intersection, and a lower bound for the essential supremum of the Hausdorff dimension. The dimensions of the intersections are typically  $\max\{\dim E +\dim F -\dim \cC^m, 0\}$, akin to other codimension theorems.  The upper estimates come from the expected sizes of coverings, whilst the lower
estimate is more intricate, using martingales to define a random measure on the intersection to facilitate a potential theoretic argument.
\end{abstract}

\begin{section}{Introduction}
The classical codimension formula describes the dimension of the intersection of two manifolds embedded in 
$\mathbb{R}^n$.
More specifically,  for  manifolds $E$ and  $F$, 
the dimension of $E\cap\sigma(F)$, where $\sigma$ is a 
rigid motion in $\mathbb{R}^n$, is `often' given by 
\begin{equation}\label{codim}
\dim(E\cap \sigma(F)) =\max\{\dim E + \dim F -n,0\}
\end{equation} and `typically' no more than this value. `Often' and `typical' can
be made precise in terms of a natural measure on the group of rigid motions on $\mathbb{R}^n$.
Dimension formulae for the intersection of one set with what may be regarded as a random image of another have been developed for
fractal sets,  for various definitions of fractional dimension and for other groups of transformations of $\mathbb{R}^n$. 
In particular, Mattila \cite{MatPaper,MatBook,Mat2} obtained fractal codimension formulae in the case of similarities and,
under certain restrictions,
for isometries, and Kahane \cite{Kahane} for a general class of groups which includes similarites. These formulae
have the common pattern of \eqref{codim}.

This paper presents formulae of this type for isometries under a suitable metric of the
 $m$-{\it ary Cantor space},  ${\mathcal C}^m$, defined as the set of infinite words or sequences formed from the symbols $\{1,2,\ldots,m\}$; thus  ${\mathcal C}^m = \{1,2,\ldots,m\}^\mathbb{N}$. We write $x = x_1x_2\ldots$ for a typical member of ${\mathcal C}^m$. We fix $r\in (0,1)$ and define a metric $d$ on  ${\mathcal C}^m$
 by
$$d(x_1x_2\ldots , \ y_1y_2\ldots) = r^k, \mbox{ where $k+1$ is the least integer such that $x_k\neq y_k$};$$
then $d$ is an ultrametric which induces the usual topology on the Cantor space.

Although our calculations are entirely in Cantor space, there is a visual geometric interpretation if  $r\in (0,1/m)$ when the Cantor space ${\mathcal C}^m$ may be identified with the $m$-ary Cantor set $C^m$ as a subset of the real numbers. This may be constructed in an analogous way to the usual middle-third Cantor set, starting with the unit interval and repeatedly replacing each interval by $m$ equally spaced closed subintervals of length ratio $r$ to that of the parent interval and with the end two intervals abutting the ends of the parent interval, see Figure 1.
The identification map $\phi:{\mathcal C}^m \to C^m$ is given by 
$\phi(x_1x_2\ldots)= (r+g) \sum_{i=1}^\infty (x_i - 1)r^{i-1}$ where $g$ is the gap length between two intervals of the first level of the Cantor set construction. With this identification   
the metric $d$ on ${\mathcal C}^m$  is equivalent to the Euclidean metric restricted to subsets of $C^m$. In particular, the Hausdorff and  box-counting dimensions of any subset of ${\mathcal C}^m$ defined using the metric $d$ equal the corresponding dimensions with respect to the Euclidean metric on $C^m\subset \mathbb{R}$.

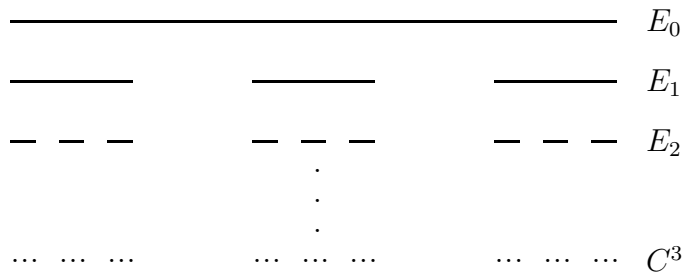
\begin{figure}[h]\label{fig:ternaryset}
  \centering
  \setlength{\unitlength}{\textwidth/10}
  \begin{picture}(6,2.5)
  \put(0.5,2.25){\line(1,0){5}}
  \put(5.75,2.17){$E_0$}
  \put(0.5,1.75){\line(1,0){1}}
  \put(2.5,1.75){\line(1,0){1}}
  \put(4.5,1.75){\line(1,0){1}}
  \put(5.75,1.67){$E_1$}
  \put(0.5,1.25){\line(1,0){.2}}
  \put(0.9,1.25){\line(1,0){.2}}
  \put(1.3,1.25){\line(1,0){.2}}
  \put(2.5,1.25){\line(1,0){.2}}
  \put(2.9,1.25){\line(1,0){.2}}
  \put(3.3,1.25){\line(1,0){.2}}
  \put(4.5,1.25){\line(1,0){.2}}
  \put(4.9,1.25){\line(1,0){.2}}
  \put(5.3,1.25){\line(1,0){.2}}
  \put(5.75,1.17){$E_2$}
  \put(3,1){.}
  \put(3,.75){.}
  \put(3,.5){.}
  \put(.5,.25){.}
  \put(.58,.25){.}
  \put(.66,.25){.}
  \put(.9,.25){.}
  \put(.98,.25){.}
  \put(1.06,.25){.}
  \put(1.3,.25){.}
  \put(1.38,.25){.}
  \put(1.46,.25){.}
  \put(2.5,.25){.}
  \put(2.58,.25){.}
  \put(2.66,.25){.}
  \put(2.9,.25){.}
  \put(2.98,.25){.}
  \put(3.06,.25){.}
  \put(3.3,.25){.}
  \put(3.38,.25){.}
  \put(3.46,.25){.}
  \put(4.5,.25){.}
  \put(4.58,.25){.}
  \put(4.66,.25){.}
  \put(4.9,.25){.}
  \put(4.98,.25){.}
  \put(5.06,.25){.}
  \put(5.3,.25){.}
  \put(5.38,.25){.}
  \put(5.46,.25){.}
  \put(5.75,.17){$C^3$}
  \end{picture}
  \caption{The  3-ary Cantor set with $r = 1/5$}
 \end{figure}
 
 Let $\mathrm{Iso }\,{\mathcal C}^m$ denote the group of isometries of ${\mathcal C}^m$. (With the Cantor set interpretation these isometries may be visualised as combinations of 
permutations of the construction intervals of the Cantor set at various levels.) The group $\mathrm{Iso }\,{\mathcal C}^m$ and its subgroups continue to be studied intensively, both from group theoretic and dynamical viewpoints, see for example \cite{BGN}. This paper provides further insight into the geometry of the group. There is a natural invariant probability measure  ${\mathbf P}$ on 
$\mathrm{Iso }\,{\mathcal C}^m$ such that the isometries that induce each admissible permutation of the construction intervals of $C^m$ at a given level have equal probability, see below.

We bring together our main results in the following statement, where $\dim_H$, $\dim_B$ and $\overline{\dim}_B$ denote 
Hausdorff, box-counting and upper box-counting dimension respectively, see \cite{FractalGeometry} for definitions. Note that
$\dim_H  {\mathcal C}^m= \dim_B  {\mathcal C}^m = -\log m /\log r$.

\begin{thm}
Let $E,F \subset {\mathcal C}^m$ be Borel sets. Then for a random isometry $\sigma \in \mathrm{Iso }{\mathcal C}^m$:
\begin{itemize}
\item[$(i)$] almost surely \  $\overline{\dim}_B(E\cap \sigma(F)) \leq\max\big\{\overline{\dim}_B E + \overline{\dim}_B F +\log m /\log r, \ 0 \big\},$

\item[$(ii)$] almost surely \  $\dim_H(E\cap \sigma(F)) \leq\max\big\{\dim_H E + \overline{\dim}_B F +\log m /\log r, \ 0 \big\},$

\item[$(iii)$] $\mathrm{esssup}_{\sigma\in \mathrm{Iso }{\mathcal C}^m}\big\{\dim_H(E\cap \sigma(F))\} \geq
\max\big\{\dim_H E + \dim_H F +\log m /\log r, \ 0 \big\}.$
\end{itemize}
\end{thm}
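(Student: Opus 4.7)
The strategy separates into a first-moment argument for the upper bounds (i), (ii) and a martingale-plus-potential-theoretic argument for the lower bound (iii). The common preliminary I would establish is the concrete description of $\bP$: identifying $\mathrm{Iso}\,\cC^m$ with the automorphism group of the $m$-ary rooted tree, $\bP$ is realised by independent uniform permutations $\pi_v\in S_m$ at every node $v$, so that for each fixed cylinder $[w]$ of length $k$ the image $\sigma([w])$ is uniformly distributed over the $m^k$ cylinders of length $k$.

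Writing $N_k(X)$ for the number of length-$k$ cylinders meeting $X$, uniformity yields the basic identity $\bE[N_k(E\cap\sigma(F))]=N_k(E)\,N_k(F)\,m^{-k}$. For (i), plugging in $N_k(E)\le r^{-ks_E}$ and $N_k(F)\le r^{-ks_F}$ for any $s_E>\overline{\dim}_B E$, $s_F>\overline{\dim}_B F$ gives $\bE[N_k(E\cap\sigma(F))]\le r^{-kt}$ with $t=s_E+s_F+\log m/\log r$; Markov's inequality and Borel--Cantelli (applied along a countable sequence of pairs $(s_E,s_F)$ decreasing to their respective limits) then force $\overline{\dim}_B(E\cap\sigma(F))\le t$ almost surely. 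For (ii) I would replace the uniform cover by a mixed-level Hausdorff cover $\{[w_i]\}$ of $E$ with $\sum_i r^{|w_i|s_E}<\delta$ for some $s_E>\dim_H E$, retain the uniform bound $N_k(F)\le r^{-ks_F}$, and estimate the expected $t$-sum of the induced cover $\{[w_i]:[w_i]\cap\sigma(F)\ne\emptyset\}$ of the intersection by
$$\sum_i r^{|w_i|t}\,N_{|w_i|}(F)\,m^{-|w_i|}\le\sum_i r^{|w_i|(t-s_F-\log m/\log r)},$$
which is at most $\delta$ exactly when $t\ge s_E+s_F+\log m/\log r$; a Borel--Cantelli argument along a sequence of covers with $\delta\to 0$ and diameters shrinking to zero then yields $\cH^t(E\cap\sigma(F))=0$ almost surely.

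For (iii) I would first pass to compact subsets of $E$ and $F$ by inner regularity of Hausdorff dimension, and for each $s_E<\dim_H E$, $s_F<\dim_H F$ invoke Frostman's lemma to obtain probability measures $\mu$ on $E$ and $\nu$ on $F$ with $\mu([w])\le Cr^{|w|s_E}$ and $\nu([w])\le Cr^{|w|s_F}$. The central construction is the branching martingale of cylinder masses
$$\mu_\sigma([w]):=m^{|w|}\,\mu([w])\,\nu(\sigma^{-1}([w])),$$
whose martingale property follows because, conditionally on the permutations above level $|w|$, the tuple $(\nu(\sigma^{-1}([wj])))_{j=1}^m$ is a uniformly random ordering of the $\nu$-masses of the $m$ children of $\sigma^{-1}([w])$, giving $\sum_j\bE[\mu_\sigma([wj])\mid\mathcal F_{|w|}]=\mu_\sigma([w])$ and $\bE[\mu_\sigma([w])]=\mu([w])$. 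A standard $L^2$-bounded-martingale argument identifies $\mu_\sigma$ as the weak limit of the absolutely continuous approximations of level $n$, giving a random Borel measure supported on $E\cap\sigma(F)$ (by compactness). Uniformity of $\sigma^{-1}([w])$ also yields the second-moment identity
$$\bE\Big[\sum_{|w|=k}\mu_\sigma([w])^2\Big]=m^k\Big(\sum_{|w|=k}\mu([w])^2\Big)\Big(\sum_{|w|=k}\nu([w])^2\Big)\le C^2\,r^{k(s_E+s_F+\log m/\log r)};$$
fed into the ultrametric expression $I_t(\mu_\sigma)=\sum_k r^{-kt}(S_k-S_{k+1})$ for the $t$-energy (with $S_k:=\sum_{|w|=k}\mu_\sigma([w])^2$), this turns $\bE[I_t(\mu_\sigma)]$ into a convergent geometric series precisely when $t<s_E+s_F+\log m/\log r$.

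The main obstacle, I expect, is certifying that the limiting measure is genuinely nonzero. The same second-moment bound specialised to $k=0$ provides $\bE[\mu_\sigma(\cC^m)^2]<\infty$, and since $\bE[\mu_\sigma(\cC^m)]=1$ is automatic, Paley--Zygmund then forces $\bP[\mu_\sigma(\cC^m)>0]>0$. On the positive-probability event $\{\mu_\sigma(\cC^m)>0\}\cap\{I_t(\mu_\sigma)<\infty\}$ the measure $\mu_\sigma$ witnesses $\dim_H(E\cap\sigma(F))\ge t$ via the standard potential-theoretic lower bound, so the essential supremum is at least $t$; letting $s_E\uparrow\dim_H E$, $s_F\uparrow\dim_H F$ and $t\uparrow s_E+s_F+\log m/\log r$ completes (iii), the trivial case $\dim_H E+\dim_H F+\log m/\log r\le 0$ being absorbed by the outer maximum with zero.
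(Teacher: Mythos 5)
Your parts (i) and (ii) are essentially the paper's arguments: the first-moment bound $\bE\,|U_k(E\cap\sigma(F))|\le m^{-k}|U_k(E)||U_k(F)|$ (note this is an inequality, not an identity as you state --- a cylinder can meet both $E$ and $\sigma(F)$ without meeting $E\cap\sigma(F)$ --- but only the upper bound is used) followed by a summable-series/Borel--Cantelli step, and for (ii) the mixed-level net-measure cover of $E$ with the uniform box-count of $F$. These are fine. Your part (iii) also follows the paper's architecture (Frostman measures, the martingale $m^{l}\mu(I)\nu(\sigma^{-1}(I))$, energy via the ultrametric cylinder decomposition, Paley--Zygmund for non-degeneracy).

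However, there is a genuine gap in (iii), and it sits exactly where the paper spends most of its effort. Your second-moment computation controls only the \emph{diagonal} sum $\sum_{|w|=k}\tau_k([w])^2$ of the \emph{level-$k$} masses $\tau_k([w])=m^{k}\mu([w])\nu(\sigma^{-1}([w]))$. Two distinct quantities are actually needed. First, the energy formula requires $\bE\big(\tau([w])^2\big)$ for the \emph{limit} masses $\tau([w])=\lim_l\tau_l([w])$; since second moments increase along a martingale, this does not follow from the single-level computation and requires a bound on $\sup_{l\ge k}\bE\big(\tau_l([w])^2\big)$, i.e.\ control of the cross terms $\bE\big(\tau_l(I)\tau_l(J)\big)$ for distinct $I,J\in U_l$ inside $[w]$. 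These images $\sigma^{-1}(I),\sigma^{-1}(J)$ are \emph{not} independent (they are forced to be distinct, with a joint law depending on the level of the common ancestor of $I$ and $J$), so the estimate needs the same-parent/different-parent case analysis and the recursive inequality $\bE(\tau_l(A)^2\mid\cF_{l-1})\le\tau_{l-1}(A)^2+c\,\tau_{l-1}(A)\,r^{l(\alpha+\beta+\log m/\log r)}$, summed as a geometric series (the paper's Lemma~\ref{lem:boundedL2}, which also needs the elementary inequality $m\sum_{i\ne j}x_ix_j\le(m-1)\sum_{i,j}x_ix_j$ to absorb the distinct-pair conditioning). Second, your non-degeneracy step fails as written: specialising your identity to $k=0$ gives only $\bE\big(\tau_0(\cC^m)^2\big)=1$, which is the deterministic statement $\tau_0(\cC^m)=1$ and says nothing about the limit. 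A non-negative mean-one martingale can converge a.s.\ to $0$, so $\bE\big(\tau(\cC^m)\big)=1$ is \emph{not} automatic; it requires uniform integrability, which again comes from the uniform-in-$l$ $\cL^2$ bound $\sup_l\bE\big(\tau_l(\cC^m)^2\big)<\infty$ that you have asserted but not proved. Once that lemma is supplied, the rest of your outline (Paley--Zygmund, the ultrametric energy expansion, and the potential-theoretic lower bound) goes through as in the paper.
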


Parts (i) and (ii) will be obtained using covering arguments. The lower bound (iii) is more complicated, and uses measures 
defined on $E$ and $F$ to set up a measure martingale that converges almost surely to a measure supported on $E\cap \sigma(F)$. A potential-theoretic argument then gives lower bounds for the dimension. 

Some basic notation will be used throughout the paper. For each $k$ and each finite word $x_1 x_2 \ldots x_k \in \{1,2,\ldots,m\}^k$ we associate the {\it level-k cylinder} $\{x_1 x_2 \ldots x_k y_{k+1}  y_{k+2}\ldots : 1\leq y_i \leq m\}$  which we will generally  refer to as an  {\it interval} $I$, to correspond to the Cantor set interpretation. We write $U_k$ for the set of all $k$th level intervals. Also,  for $A \subset {\mathcal C}^m$ we use $U_k(A)$ to denote the set of $k$th level intervals 
that intersect $A$ non-trivially, specifically $U_k(A) = \{I \in U_k : I \cap A \neq \emptyset\}$, so that the intervals of  
$U_k(A)$ form a cover of $A$ for each $k$. We will write $|\cdot|$ to denote cardinality, so in particular 
$|U_k(A)|$ is the number of level $k$ intervals that intersect $A$. We write $d(A) = \inf\{d(x,y) :x,y \in A\}$ for the {\it diameter} of a (non-empty) set $A \subset {\mathcal C}^m$, so that $d(I) = r^{-k}$ if $I$ is a $k$th-level interval.

A convenient way of characterising the isometries $\mathrm{Iso }\,{\mathcal C}^m$ is using the natural correspondence of $\cC$ with the infinite rooted $m$-ary tree, ${\mathcal T}^m$. The boundary of ${\mathcal T}^m$ is identified with the Cantor space $\cC^m$ and the vertices correspond to the intervals or cylinders. Then the  group of graph automorphisms of the rooted tree ${\mathcal T}^m$ correspond to the group of isometries $\mathrm{Iso }\,{\mathcal C}^m$ of the Cantor space. An automorphism acts by 
`twisting' the tree at sets of nodes, perhaps infinitely
many, rearranging the children of each node into a new permutation, see Figure 2.

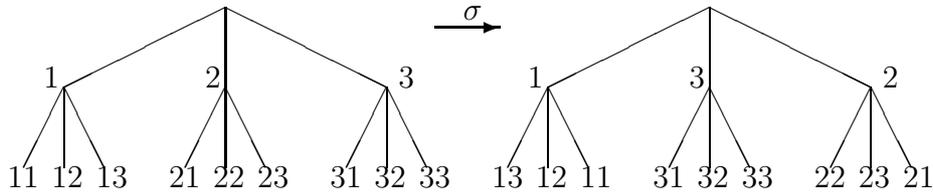
\begin{figure}[h]\label{fig:autoftree}
\centering 
  \setlength{\unitlength}{\textwidth/15}
  \begin{picture}(12,2.5)
  \put(3,2.25){\line(0,-1){1}}
  \put(3,2.25){\line(2,-1){2}}
  \put(3,2.25){\line(-2,-1){2}}
  \put(1,1.25){\line(0,-1){1}}
  \put(1,1.25){\line(1,-2){.5}}
  \put(1,1.25){\line(-1,-2){.5}}
  \put(3,1.25){\line(0,-1){1}}
  \put(3,1.25){\line(1,-2){.5}}
  \put(3,1.25){\line(-1,-2){.5}}
  \put(5,1.25){\line(0,-1){1}}
  \put(5,1.25){\line(1,-2){.5}}
  \put(5,1.25){\line(-1,-2){.5}}
  \put(9,2.25){\line(0,-1){1}}
  \put(9,2.25){\line(2,-1){2}}
  \put(9,2.25){\line(-2,-1){2}}
  \put(7,1.25){\line(0,-1){1}}
  \put(7,1.25){\line(1,-2){.5}}
  \put(7,1.25){\line(-1,-2){.5}}
  \put(9,1.25){\line(0,-1){1}}
  \put(9,1.25){\line(1,-2){.5}}
  \put(9,1.25){\line(-1,-2){.5}}
  \put(11,1.25){\line(0,-1){1}}
  \put(11,1.25){\line(1,-2){.5}}
  \put(11,1.25){\line(-1,-2){.5}}
  \put(.75,1.25){1}
  \put(2.75,1.25){2}
  \put(5.15,1.25){3}
  \put(.30,0){11}
  \put(.85,0){12}
  \put(1.40,0){13}
  \put(2.30,0){21}
  \put(2.85,0){22}
  \put(3.40,0){23}
  \put(4.30,0){31}
  \put(4.85,0){32}
  \put(5.40,0){33}
  \put(6.75,1.25){1}
  \put(8.75,1.25){3}
  \put(11.15,1.25){2}
  \put(6.30,0){13}
  \put(6.85,0){12}
  \put(7.40,0){11}
  \put(8.30,0){31}
  \put(8.85,0){32}
  \put(9.40,0){33}
  \put(10.30,0){22}
  \put(10.85,0){23}
  \put(11.40,0){21}
  \thicklines
  \put(5.6,2){\vector(1,0){.8}}
  \put(5.95,2.1){$\sigma$}
  \end{picture}
  \caption{An automorphism $\sigma$ acting on two levels of the ternary Cantor space}
\end{figure}

The natural invariant probability space $(\mathrm{Iso }\,{\mathcal C}^m, {\mathcal F},\mathbf{P})$ on the isometries of $\cC^m$ is defined 
 as follows. For each $k$ let $\pi$ be an admissible 
permutation of the intervals of $U_k$ (i.e. one that is achievable by some $\sigma \in \mathrm{Iso }\,{\mathcal C}^m$) and let ${\mathcal I}_\pi$ be the set of all isometries $\sigma \in \mathrm{Iso }\,{\mathcal C}^m$ 
such that $\sigma(I) =  \pi (I)$ for all $I\in U_k$. Let ${\mathcal F}_k$ be the finite sigma-field consisting of 
finite unions of all such ${\mathcal I}_\pi$. We define a probability on 
${\mathcal F}_k$ by ascribing equal probability to each ${\mathcal I}_\pi$, so that
$\mathbf{P} ({\mathcal I}_\pi)  = m^{-k(k+1)/2}$, and extending to ${\mathcal F}_k$. These sigma-fields form an 
increasing sequence and we define ${\mathcal F}= {\mathcal S}(\bigcup_{k=0}^\infty{\mathcal F}_k)$ for the sigma-field generated by their union and extend $\mathbf{P}$ to 
${\mathcal F}$ in the usual way. Note that for $I,J\in U_k$ and $\sigma\in \mathrm{Iso}\, \cC^m$,
$\bP\big(\sigma(I)=J\big)=m^{-k}$.
\end{section}


\begin{section}{Upper Box Counting Dimension: Upper Bound}
In this section, we bound the upper box counting dimension of the intersection of a subset of $\cC^m$ with a random image of another subset.

\begin{thm}\label{thm2}
Let $E, F \subset \cC^m$. Then, almost surely, 
\begin{equation}\label{upperboxbound}
\overline{\dim}_B(E\cap \sigma (F))
\leq \max\bigg\{\overline{\dim}_B E+\overline{\dim}_B F+\frac{\log m }{\log r },0\bigg\}.
\end{equation}
\end{thm}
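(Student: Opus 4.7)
The plan is a first-moment argument on the number of level-$k$ cylinders meeting the random intersection, followed by Markov plus Borel--Cantelli. Write $N_k(A)=|U_k(A)|$; since every level-$k$ cylinder has diameter $r^k$, one has $\overline{\dim}_B A = \limsup_{k\to\infty}\log N_k(A)/(-k\log r)$. The key structural fact is that each isometry $\sigma$ permutes the level-$k$ cylinders, so $I\in U_k$ meets $\sigma(F)$ if and only if $I=\sigma(J)$ for some $J\in U_k(F)$. Combining this with the partition property of $U_k$ gives
\[
N_k(E\cap\sigma(F))\;\leq\;|U_k(E)\cap\sigma(U_k(F))|\;\leq\;\sum_{I\in U_k(E)}\sum_{J\in U_k(F)}\mathbf{1}_{\{\sigma(J)=I\}}.
\]
Taking expectations and using the identity $\bP(\sigma(J)=I)=m^{-k}$ for $I,J\in U_k$ recorded at the end of the introduction produces the clean product bound
\[
\bE\bigl[N_k(E\cap\sigma(F))\bigr]\;\leq\;N_k(E)\,N_k(F)\,m^{-k}.
\]

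To convert this into a dimension bound, set $u=\max\{\overline{\dim}_B E+\overline{\dim}_B F+\log m/\log r,\,0\}$, fix $\varepsilon>0$, and pick $k_0$ so that $N_k(E)\leq r^{-k(\overline{\dim}_B E+\varepsilon)}$ and $N_k(F)\leq r^{-k(\overline{\dim}_B F+\varepsilon)}$ for all $k\geq k_0$. Since $m^{-k}=r^{-k\log m/\log r}$ and $\overline{\dim}_B E+\overline{\dim}_B F+\log m/\log r\leq u$, the previous estimate yields $\bE[N_k(E\cap\sigma(F))]\leq r^{-k(u+2\varepsilon)}$ for $k\geq k_0$. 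Markov's inequality then gives
\[
\bP\bigl(N_k(E\cap\sigma(F))\geq r^{-k(u+3\varepsilon)}\bigr)\;\leq\;r^{k\varepsilon},
\]
which is summable in $k$. Borel--Cantelli produces a full-measure event on which $N_k(E\cap\sigma(F))<r^{-k(u+3\varepsilon)}$ for all sufficiently large $k$, hence $\overline{\dim}_B(E\cap\sigma(F))\leq u+3\varepsilon$. Intersecting such events as $\varepsilon$ runs through a positive sequence tending to $0$ completes the proof.

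There is no serious obstacle. The only substantive probabilistic input is the identity $\bP(\sigma(J)=I)=m^{-k}$, which is precisely what decouples the first moment into a clean product of the covering numbers of $E$ and $F$. The clipping at $0$ in the statement is automatic: when $\overline{\dim}_B E+\overline{\dim}_B F+\log m/\log r<0$, so that $u=0$, the expected cylinder count $\bE[N_k(E\cap\sigma(F))]$ already decays geometrically, and the identical Markov--Borel--Cantelli step forces $\overline{\dim}_B(E\cap\sigma(F))\leq\varepsilon$ for every $\varepsilon>0$. The sole piece of bookkeeping worth flagging is that the almost-sure event depends on $\varepsilon$, so one must take the intersection over a countable sequence $\varepsilon_n\downarrow 0$ rather than attempting to handle all $\varepsilon>0$ simultaneously.
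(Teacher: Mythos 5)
Your proposal is correct and follows essentially the same route as the paper: the identical first-moment bound $\bE|U_k(E\cap\sigma(F))|\leq m^{-k}|U_k(E)||U_k(F)|$ obtained from $\bP(\sigma(J)=I)=m^{-k}$, followed by a summability argument over $k$. Your Markov-plus-Borel--Cantelli step is just an unpacked version of the paper's observation that $\bE\bigl(\sum_k r^{k(d+\epsilon)}|U_k(E\cap\sigma(F))|\bigr)<\infty$ forces the covering numbers to be almost surely $O(r^{-k(d+\epsilon)})$, so the two arguments coincide in substance.
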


\begin{proof}
First note that
\[U_k\big(E\cap\sigma(F)\big)\subset U_k\big(E\big)\cap U_k\big(\sigma(F)\big)=U_k\big(E\big)\cap \sigma\big(U_k(F)\big)\]
For $k \geq 0$ and  $J\in U_k(F)$, consider the indicator function $\chi_J: \mathrm{Iso }\,{\mathcal C}^m \to \{0,1\}$ such that $\chi_J(\sigma)= 1$ when $\sigma(J)\in U_k(E)$. Then
\begin{equation*}
\big|U_k\big(E\cap\sigma(F)\big)\big|\leq |U_k\big(E\big)\cap \sigma\big(U_k(F)\big)|=\Sum_{J\in U_k(F)}\chi_J(\sigma).
\end{equation*}
A random automorphism $\sigma$ takes an interval $J\in U_k$ to a particular interval $I\in U_k$ with probability $m^{-k}$, therefore
for all $J\in U_k$
\begin{eqnarray*}
\bE(\chi_J(\sigma))=m^{-k}|U_k(E)|.
\end{eqnarray*}
This implies
\begin{eqnarray*}
\bE\big(\big|U_k\big(E\cap\sigma(F)\big)\big|\big)
\leq\Sum_{J\in U_k(F)}\bE\big(\chi_J(\sigma)\big)
=m^{-k}|U_k(E)||U_k(F)|.
\end{eqnarray*}

Assume that  $\overline{\dim}_B E + \overline{\dim}_B F +\log m /\log r> 0$, otherwise there is nothing to prove.   Take $\alpha$ and $\beta$ such that  $\alpha>\overline{\dim}_B E$ and $\beta>\overline{\dim}_B F$. From the definition of upper box dimension,
 there exist $c_1, c_2>0$ such that, for all $k\geq 0$,
\begin{eqnarray*}
|U_k(E)|\leq  c_1r^{-k\alpha}\quad \mbox{and}  \quad |U_k(F)|\leq  c_2r^{-k\beta}. \label{eqn:upperboxbound}
\end{eqnarray*}
Setting $c=c_1c_2$,  for all $k>0$,
\begin{eqnarray*}
\bE\Big(\big|U_k\big(E\cap\sigma(F)\big)\big|\Big)
&\leq&cr^{-k(\alpha+\beta+\log m /\log r)}
= cr^{-kd},
\end{eqnarray*}
where $d=\alpha+\beta+\log m /\log r>0$. Let $\epsilon>0$. Then
\begin{eqnarray*}
\bE\Big(\Sum_{k=0}^\infty r^{k(d+\epsilon)}\big|U_k\big(E\cap\sigma(F)\big)\big|\Big)
\leq \Sum_{k=0}^\infty cr^{k \epsilon}<\infty.
\end{eqnarray*}
Thus, almost surely, there exists a random $C<\infty$ such that
\begin{eqnarray*}
\Sum_{k=0}^\infty r^{k(d+\epsilon)}\big|U_k\big(E\cap\sigma(F)\big)\big|&\leq& C,
\end{eqnarray*}
so
\begin{eqnarray*}
\big|U_k\big(E\cap\sigma(F)\big)\big|&\leq& Cr^{-k(d+\epsilon)}
\end{eqnarray*}
for all $k \geq 0$.
When calculating upper box dimension it is enough to consider coverings by intervals of lengths $r^{-k}$ for $0 \leq k<\infty$, so
 \begin{eqnarray*}
\overline{\dim}_B(E\cap \sigma(F))
\leq d+\epsilon
= \alpha+\beta+\log m /\log r+\epsilon.
\end{eqnarray*}
Taking $\epsilon$  arbitrarily small and $\alpha$ and $\beta$ arbitrarily close to  $\overline{\dim}_B E$ and $\overline{\dim}_B F$ gives \eqref{upperboxbound}.
\end{proof}

Note that a minor variation on this argument shows that $E\cap \sigma(F) = \emptyset$ almost surely if 
$\overline{\dim}_B E+\overline{\dim}_B F+\log m /\log r <0$.

\end{section}

\begin{section}{Hausdorff Dimension: Upper Bound}
We will now obtain an upper bound for the Hausdorff dimension of the intersections. 
We write ${\mathcal H}^s$ for $s$-dimensional Hausdorff measure, see \cite{MatBook} for its definition and properties.
However, rather than work directly with Hausdorff measures, it is convenient to use an equivalent definition based on coverings of subsets of $\cC^m$ by intervals or cylinders rather than by arbitrary sets.  Let $\cU=\bigcup_{k=0}^\infty U_k$ denote the collection of intervals and let $d(\cdot)$ denote the diameter of a set with respect to the metric $d(\cdot,\cdot)$. For  $s\geq 0$,  $\delta >0$ and $A\subset C^m$,  define the $\delta$-premeasures by 
\begin{eqnarray*}
\cM_{\delta}^s(A)&=&\inf\Big\{\Sum_{i=1}^\infty d(I_i)^{s}:A \subset \bigcup_{i=1}^\infty   I_i,d(I_i) \leq \delta\Big\}
\end{eqnarray*}
and let
\begin{eqnarray*}
\cM^s(A)&=&\lim_{\delta\to0}\cM_{\delta}^s(A).
\end{eqnarray*}
Then  $\cM^s$ is a Borel measure on $\cC^m$. 
\begin{lem}\label{equivmes}
For all $A \subset \cC^m$,  $\cM^s(A)=\cH^s(A)$. In particular,
$\dim_H(A)=\sup\{s:\cM^s(A)>0\} = \inf\{s:\cM^s(A)=0\}$.
\end{lem}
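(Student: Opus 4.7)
The plan is to exploit the ultrametric structure of $(\cC^m,d)$. Since $d$ takes values only in the discrete set $\{0\}\cup\{r^k:k\ge 0\}$, the diameter of any set with at least two points is attained and equals $r^k$ for some $k\ge 0$. Moreover, if $A\subset\cC^m$ has diameter $r^k$ and $x\in A$, every $y\in A$ satisfies $d(x,y)\le r^k$, which forces $y$ to agree with $x$ in the first $k$ coordinates. Hence $A$ is contained in the level-$k$ cylinder through $x$, and that cylinder has diameter exactly $r^k=d(A)$.

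Using this I would compare the two premeasures directly. Given any cover $\{U_i\}$ of $A$ with $d(U_i)\le\delta$ for each $i$, each $U_i$ sits inside a cylinder $I_i$ with $d(I_i)=d(U_i)\le\delta$, so $\{I_i\}$ is a $\delta$-cover by cylinders satisfying $\sum_i d(I_i)^s = \sum_i d(U_i)^s$. Taking infima yields $\cM^s_\delta(A)\le\cH^s_\delta(A)$. The reverse inequality $\cH^s_\delta(A)\le\cM^s_\delta(A)$ is immediate because cylinders are themselves admissible covering sets, so the infimum defining $\cH^s_\delta$ is over a richer family. Letting $\delta\to 0$ gives $\cM^s(A)=\cH^s(A)$. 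The dimension identities then follow from the standard characterisation $\dim_H A = \inf\{s:\cH^s(A)=0\} = \sup\{s:\cH^s(A)>0\}$.

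There is no substantive obstacle; the only subtlety worth flagging is that, in this ultrametric setting, the enclosing cylinder can be chosen with diameter \emph{equal to} (rather than merely comparable to) the diameter of the set it covers. This is what makes the identity $\cM^s=\cH^s$ hold at the level of measures and not just of dimensions, and it distinguishes Cantor space from a general metric space, where an analogous reduction would incur a constant loss and give only equivalence of dimensions.
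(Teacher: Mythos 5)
Your proof is correct and follows essentially the same route as the paper: one inequality is immediate since cylinder covers are a subfamily of admissible covers, and the other uses the ultrametric fact that any set is contained in a cylinder of exactly the same diameter. You simply spell out in more detail why that enclosing cylinder exists and why its diameter is not inflated.
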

\begin{proof}
Cleary $\cH^s(A)\leq\cM^s(A)$ for all $A$, since any admissible cover for $\cM^s$ is an admissible cover for $\cH^s$.  
For the opposite inequality, note that the diameter of any set $O \subset \cC^m$ equals that of the smallest interval $I$ of $\cU$ that contains $O$. Thus replacing any covering set $O$ by the corresponding interval $I$ does not change the diameters involved in the definitions of the measures, so $\cM^s(A)\leq\cH^s(A)$.
\end{proof}

\begin{thm}\label{hausub}
Let $E, F \subset \cC^m$. Almost surely
\begin{equation}\label{dimhausbound}
\dim_H(E\cap\sigma(F))
\leq \max\bigg\{\dim_H E+\overline{\dim}_B F+\frac{\log m}{\log r},0\bigg\}.
\end{equation}
\end{thm}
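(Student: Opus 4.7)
The plan is to adapt the expected-covering argument from Theorem \ref{thm2} but to allow covers of $E$ by intervals of \emph{varying} levels (as is needed for Hausdorff dimension), while still using level-matched covers of $F$ on an interval-by-interval basis. By Lemma \ref{equivmes} it suffices to show that, for suitable $t$ above the right-hand side of \eqref{dimhausbound}, the premeasure $\cM^t(E\cap\sigma(F))$ vanishes almost surely.

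First I would fix $s>\dim_H E$ and $\beta>\overline{\dim}_B F$, and set $t=s+\beta+\log m/\log r$, assuming $t>0$ (otherwise the right-hand side of \eqref{dimhausbound} is $0$ and one can deduce $E\cap\sigma(F)=\emptyset$ almost surely, just as in the remark after Theorem \ref{thm2}). Also fix $\delta>0$ and let $k_0=\lceil \log\delta/\log r\rceil$. From $s>\dim_H E$ and Lemma \ref{equivmes} we have $\cM^s(E)=0$, so for any $\epsilon>0$ we can cover $E$ by intervals of diameter at most $\delta$ with $s$-sum less than $\epsilon$; grouping these by level and writing $V_k\subset U_k$ for the intervals of level $k$ chosen, this gives $\sum_{k\ge k_0}|V_k|r^{ks}<\epsilon$. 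At the same time, $|U_k(F)|\le c_2 r^{-k\beta}$ for all $k$ and some $c_2>0$.

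Next I would cover $E\cap\sigma(F)$ by the intervals $\{I\in V_k: I\cap\sigma(F)\ne\emptyset\}$ as $k$ ranges over $k\ge k_0$, a valid $\delta$-cover for $\cM^t_\delta$. For a level-$k$ interval $I$, the event $I\cap\sigma(F)\ne\emptyset$ forces $\sigma(J)=I$ for some $J\in U_k(F)$, so by the probability $\bP(\sigma(J)=I)=m^{-k}$ and a union bound exactly as in Theorem \ref{thm2},
\[
\bE\big|\{I\in V_k:I\cap\sigma(F)\ne\emptyset\}\big|\le m^{-k}|V_k|\,|U_k(F)|.
\]
Multiplying by $r^{kt}$, summing over $k\ge k_0$, using $m^{-k}=r^{-k\log m/\log r}$ and the choice of $t$, the exponent collapses to $r^{ks}$ and I obtain
\[
\bE\,\cM^t_\delta(E\cap\sigma(F))\le c_2\sum_{k\ge k_0}|V_k|r^{ks}<c_2\epsilon.
\]
Since $\epsilon>0$ was arbitrary, $\cM^t_\delta(E\cap\sigma(F))=0$ almost surely; applying this to a sequence $\delta_n\downarrow 0$ and intersecting the full-measure events yields $\cM^t(E\cap\sigma(F))=0$ almost surely, hence $\dim_H(E\cap\sigma(F))\le t$. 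Finally, letting $s$ and $\beta$ decrease to $\dim_H E$ and $\overline{\dim}_B F$ through countable sequences gives \eqref{dimhausbound}.

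The only genuinely new ingredient over Theorem \ref{thm2} is the handling of the variable-level cover of $E$: the key observation is that at each level $k$ the conditional covering argument of the box-counting proof applies verbatim because two intervals of the same level are either equal or disjoint, so the contribution from level $k$ decouples. I do not anticipate a serious obstacle; the main point to be careful about is ensuring the $\delta\to 0$ passage only uses a countable collection of null events, which is why one applies the bound along a sequence $\delta_n\downarrow 0$ before taking the limit.
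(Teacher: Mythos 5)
Your proposal is correct and follows essentially the same route as the paper: cover $E$ by intervals with controlled $s$-sum, retain only those meeting $\sigma(F)$, and bound the expected premeasure of the intersection via $\bP(\sigma^{-1}(I)\cap F\neq\emptyset)\leq m^{-k}|U_k(F)|\leq c\,d(I)^{-(\beta+\log m/\log r)}$. The only differences are cosmetic: you exploit $\cM^s(E)=0$ to make the expectation arbitrarily small at fixed $\delta$ and then intersect over a sequence $\delta_n\downarrow 0$, whereas the paper keeps the $s$-sum bounded by $1$, gains a factor $\delta^{s-t}$ by taking the exponent strictly larger, and sums over $\delta=2^{-k}$.
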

\begin{proof}
Take $\alpha$ and $\beta$ with  $\alpha>\dim_H E$ and $\beta>\overline{\dim}_B F$.
Then there exists $c>0$ such that for all $k\geq 0$
\begin{eqnarray*}
|U_k(F)|&\leq& cr^{-k\beta}.
\end{eqnarray*}
By Lemma \ref{equivmes}, for all $\delta>0$ we can find intervals
$I_i \in \cU$ such that $E\subset\bigcup_i I_i$, $d(I_i)\leq \delta$, and $\Sum_i d(I_i)^\alpha\leq 1$.  Taking only those intervals
 $I_i$ that intersect $\sigma(F)$ non-trivially, gives a $\delta$-cover of $E\cap\sigma(F)$ and therefore, for $s>0$,
\begin{eqnarray*}
\cM^s_{\delta}(E\cap\sigma(F))&\leq&\Sum_i\{d(I_i)^s:\sigma^{-1}(I_i)\cap F\neq \emptyset\}.
\end{eqnarray*}
Taking the expectation,
\begin{eqnarray*}
\bE\big(\cM^s_{\delta}(E\cap\sigma(F))\big)&\leq&\Sum_id(I_i)^s\bP(\sigma^{-1}(I_i)\cap F\neq \emptyset).
\end{eqnarray*}
If $I_i\in U_k$, then $d(I_i)=r^k$, so 
\begin{eqnarray*}
\bP(\sigma^{-1}(I_i)\cap F\neq \emptyset)&=&m^{-k}|U_k(F)|\\
&\leq&cm^{-k}r^{-k\beta}\\
&=&cd(I_i)^{-(\beta+\log m/\log r)}.
\end{eqnarray*}
Thus
\begin{eqnarray*}
\bE\big(\cM^s_{\delta}(E\cap\sigma(F))\big)&\leq&c\Sum_id(I_i)^{s-(\beta+\log m/\log r)}\\
&=&c\Sum_i d(I_i)^{\alpha}d(I_i)^{s-(\alpha+\beta+\log m/\log r)}\\
&\leq&c\Sum_id(I_i)^{\alpha}\delta^{s-(\alpha+\beta+\log m/\log r)}\\
&\leq&c\delta^{s-(\alpha+\beta+\log m/\log r)}
\end{eqnarray*}
provided that $s-(\alpha+\beta+\log m/\log r) >0$.
Taking $\delta=2^{-k }$ and summing,
\begin{eqnarray*}
\bE\Big(\Sum_{k=1}^\infty\cM_{2^{-k}}^{s}(E\cap\sigma(F))\Big)
&\leq&c\Sum_{k=1}^\infty2^{-k(s-(\alpha+\beta+\log m/\log r))}<\infty.
\end{eqnarray*}
This implies that, almost surely,
\begin{eqnarray*}
\Sum_{k=1}^\infty\cM_{2^{-k}}^{s}(E\cap\sigma(F))&<&\infty
\end{eqnarray*}
so
\begin{eqnarray*}
\cM^{s}(E\cap\sigma(F))= \lim_{\delta \to 0} \cM_{\delta}^{s}(E\cap\sigma(F))=0.
\end{eqnarray*}
In particular, by Lemma \ref{equivmes},  $\dim_H(E\cap\sigma(F))\leq s$ almost surely, provided that
$ s >\alpha+\beta+\log m/\log r$.
This holds for  $\alpha$ and $\beta$ arbitrarily close to  $\dim_H E$ and $\overline{\dim}_B F$, giving \eqref{dimhausbound}.
\end{proof}

Again, minor changes to the argument show that $E\cap \sigma(F) = \emptyset$ almost surely if 
$\dim_H E+\overline{\dim}_B F+\log m /\log r <0$.

Note that if, as often happens, either $E$ or $F$ is sufficiently regular to have equal Hausdorff and upper box dimensions, then we get $\dim_H$ throughout inequality \eqref{dimhausbound}.
\end{section}


\begin{section}{Hausdorff Dimension: Lower Bound}
In this section we obtain a lower bound for the essential supremum of $\dim_H (E\cap\sigma(F))$ where    $\sigma$ is a random isometry. To achieve this we put Frostman-type measures on $E$ and $F$ and define a measure martingale that converges to a measure on $E\cap\sigma(F)$. By examining the $s$-energy of this measure we obtain a lower bound for the dimension that occurs with positive probability. The bulk of the calculation is devoted to showing that the martingales are $\cL^2$-bounded.

Throughout this section, $E,F$ will be Borel subsets of $\cC^m$ and $0< \alpha <\dim_H E$ and $0< \beta <\dim_H F$. Eventually we will take $\alpha$ and $\beta$ arbitrarily close to the respective dimensions.
\begin         {lem}\label{lem:measurebounds}
There exist probability measures $\mu$ and $\nu$, 
with compact support contained in $E$ and $F$ respectively, and positive constants $c_E$ and $c_F$ such that for all $k\geq0$ and $I\in U_k$,
\begin{equation}\label{frost}
\mu(I)\leq c_Er^{k\alpha} \quad \text{and}\quad \nu(I)\leq c_Fr^{k\beta}.
\end{equation}
\end{lem}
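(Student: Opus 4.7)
The plan is to deduce the lemma from the classical Frostman lemma, using the fact that in the ultrametric $d$ the closed balls of radius $r^k$ coincide exactly with the level-$k$ cylinders in $U_k$. Once this translation is in hand, the Frostman ball condition becomes the desired interval bound, and normalising produces the probability measures.

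First, I would pass from the Borel sets $E$ and $F$ to compact subsets. Since $\alpha < \dim_H E$, one has $\cH^\alpha(E) = \infty$. Because $\cC^m$ is a compact metric space and $E$ is Borel (hence analytic), a classical theorem of Davies gives a compact set $K_E\subset E$ with $0<\cH^\alpha(K_E)<\infty$; similarly a compact $K_F\subset F$ with $0<\cH^\beta(K_F)<\infty$. This is the step where the hypotheses $\alpha<\dim_H E$ and $\beta<\dim_H F$ (strict inequalities) are used.

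Next, I would apply Frostman's lemma to each compact set to obtain non-zero finite Borel measures $\mu_0$, $\nu_0$ with $\mathrm{supp}\,\mu_0\subset K_E$ and $\mathrm{supp}\,\nu_0\subset K_F$, and constants $C_E,C_F>0$ such that
\[
\mu_0\big(B(x,t)\big)\leq C_E t^\alpha \quad\text{and}\quad \nu_0\big(B(y,t)\big)\leq C_F t^\beta
\]
for all $x,y\in \cC^m$ and $t>0$. The key observation is that in the ultrametric $d$, the closed ball $B(x,r^k)$ equals the unique level-$k$ cylinder containing $x$, since $d(x,y)\leq r^k$ if and only if $x$ and $y$ agree on their first $k$ coordinates. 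Hence for any $I\in U_k$, either $I$ is disjoint from $\mathrm{supp}\,\mu_0$ (in which case $\mu_0(I)=0$) or $I=B(x,r^k)$ for some $x\in I\cap\mathrm{supp}\,\mu_0$, giving $\mu_0(I)\leq C_E r^{k\alpha}$, and similarly for $\nu_0$. Finally I would set $\mu=\mu_0/\mu_0(\cC^m)$ and $\nu=\nu_0/\nu_0(\cC^m)$, folding the normalising constants into $c_E$ and $c_F$.

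There is no serious obstacle here: the result is essentially a restatement of Frostman's lemma in the ultrametric setting. If the authors prefer a self-contained argument rather than citing Frostman, the construction can be carried out directly on the tree $\cT^m$ by a standard mass-redistribution scheme: distribute any initial mass on $K_E$, then at each level $k$ and each $I\in U_k$ truncate the mass assigned to $I$ down to $c_E r^{k\alpha}$; the ultrametric structure makes this iterative trimming converge without needing any covering lemmas, and positivity of the limiting total mass follows from $\cH^\alpha(K_E)>0$.
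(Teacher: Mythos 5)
Your proposal is correct and follows essentially the same route as the paper, which simply invokes Frostman's Lemma for metric spaces (citing Mattila and Rogers) to obtain probability measures with $\mu(A)\leq c_E d(A)^{\alpha}$ for all $A\subset\cC^m$ and then notes that $d(I)=r^k$ for $I\in U_k$. The extra scaffolding you supply --- Davies' theorem to pass to compact subsets of positive finite measure, the identification of closed balls of radius $r^k$ with level-$k$ cylinders, and the normalisation --- is exactly what is packaged inside the cited form of Frostman's Lemma, so no further comparison is needed.
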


\begin{proof}
By Frostman's Lemma for metric spaces \cite{MatBook,Rogers},  there are probability
measures $\mu$ and $\nu$, such that $\mu(A)\leq c_E d(A)^{\alpha}$ and
$\nu(A)\leq c_F d(A)^{\beta}$ for all $A \subset \cC^m$. If $I\in U_k$, then  $d(I) = r^k$ so the conclusion follows.
\end{proof}

Let $k\in\N$ and let $\mu$ and $\nu$ be given by Lemma \ref{lem:measurebounds}. For all $A\in U_k$ and $l\geq k$ define a random variable
\begin{eqnarray}\label{martdef}
\tau_l(A)&=&m^l\Sum_{I\in U_l(A)}\mu(I)\nu(\sigma^{-1}(I)).
\end{eqnarray}
Note that $\tau_l(A)$ is $\cF_l$ measurable, where  $\cF_l$ is the sigma-field generated by the isometries defined at the $l$th level, see Section 1. 
We will show that $\{\tau_l(A), \cF_l\}_{l\geq k}$ is an $\cL^2$-bounded martingale and that the limits of these martingales give rise to an additive set function on ${\mathcal U}= \bigcup_{k=0}^\infty U_k$ and thus a measure on  $\cC^m$.

\begin{lem}
Let $A\in U_k$. Then $\{\tau_l(A), \cF_l\}_{l\geq k}$ is a non-negative martingale.  
\end{lem}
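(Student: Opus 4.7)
My plan is to verify the three defining properties of a non-negative $\cL^1$ martingale adapted to $\{\cF_l\}_{l\geq k}$: adaptedness, integrability, and the identity $\bE(\tau_{l+1}(A)\mid\cF_l)=\tau_l(A)$. Non-negativity of $\tau_l(A)$ is obvious since $\mu$, $\nu$ and $m^l$ are all non-negative. Adaptedness is essentially built into the definition: $\tau_l(A)$ depends on $\sigma$ only through the terms $\sigma^{-1}(I)$ with $I\in U_l$, i.e.\ through the permutation of $U_l$ induced by $\sigma$, which is precisely what $\cF_l$ records.

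For integrability I would use the identity $\bP(\sigma^{-1}(I)=J)=m^{-l}$ for $I,J\in U_l$ noted at the end of Section~1 (it is the inverse-image version of $\bP(\sigma(I)=J)=m^{-l}$, which follows because the group inversion preserves $\bP$). Since $\nu$ is a probability measure and $U_l$ partitions $\cC^m$, this gives $\bE(\nu(\sigma^{-1}(I)))=m^{-l}\Sum_{J\in U_l}\nu(J)=m^{-l}$, and hence
\[
\bE(\tau_l(A))=m^l\Sum_{I\in U_l(A)}\mu(I)\cdot m^{-l}=\mu(A)\leq 1.
\]
So $\tau_l(A)\in\cL^1$ with $\bE(\tau_l(A))=\mu(A)$ independent of $l$, already a consistency check for the martingale property.

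The main step is the conditional expectation. The one ingredient that requires care is the branching description of the random isometry: conditional on $\cF_l$, for each $I\in U_l$ the restriction of $\sigma$ to the $m$ children of $I$ is a uniformly random bijection onto the $m$ children of $\sigma(I)$, and these bijections are mutually independent across different $I\in U_l$ (this is exactly the product structure encoded in the construction of $\bP$). Dually, for any $I'\in U_{l+1}$ with parent $I\in U_l$, the preimage $\sigma^{-1}(I')$ is a uniformly random child of the $\cF_l$-measurable interval $\sigma^{-1}(I)$, so
\[
\bE\big(\nu(\sigma^{-1}(I'))\,\big|\,\cF_l\big)=\frac{1}{m}\,\nu(\sigma^{-1}(I)).
\]
Because $A\in U_k$ and $l\geq k$, each $I\in U_l(A)$ lies inside $A$, so all $m$ children of $I$ belong to $U_{l+1}(A)$. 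Grouping the sum in $\tau_{l+1}(A)$ by parents and telescoping via $\Sum_{I'\subset I}\mu(I')=\mu(I)$ then yields
\[
\bE(\tau_{l+1}(A)\mid\cF_l)=m^{l+1}\Sum_{I\in U_l(A)}\frac{\nu(\sigma^{-1}(I))}{m}\Sum_{I'\subset I}\mu(I')=m^l\Sum_{I\in U_l(A)}\mu(I)\nu(\sigma^{-1}(I))=\tau_l(A),
\]
which is the desired identity. I do not anticipate a serious obstacle in this lemma itself; the Markovian branching structure of $\bP$ drives everything and the rest is bookkeeping. The genuine work, as flagged in the section introduction, will be the $\cL^2$-boundedness and the energy estimate to come, not this martingale identity.
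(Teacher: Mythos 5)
Your proof is correct and follows essentially the same route as the paper: the key step in both is that, conditional on $\cF_l$, the preimage $\sigma^{-1}(I')$ of a child $I'$ is uniformly distributed over the $m$ children of $\sigma^{-1}(I)$, giving $\bE\big(\nu(\sigma^{-1}(I'))\,\big|\,\cF_l\big)=\nu(\sigma^{-1}(I))/m$, followed by grouping the sum over parents and using $\sum_{I'\subset I}\mu(I')=\mu(I)$. Your explicit checks of adaptedness and of integrability via $\bE(\tau_l(A))=\mu(A)$ are correct additions that the paper leaves implicit.
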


\begin{proof}
Let  $l\geq k+1$. For each  $I\in U_l$, we write $I'\in U_{l-1}$ for the parent interval of $I$. Then
\begin{eqnarray}\label{martsum}
\bE\big(\tau_l(A)|\cF_{l-1}\big)&=&m^l\Sum_{I\in U_l(A)}\mu(I)\bE\big(\nu(\sigma^{-1}(I))|\cF_{l-1}\big).
\end{eqnarray}
Conditional on $\cF_{l-1}$, $\sigma^{-1}(I)$ is equally likely to be any of the $m$ children of $\sigma^{-1}(I')$ so
\begin{eqnarray*}
\bE\big(\nu(\sigma^{-1}(I))|\cF_{l-1}\big)&=&m^{-1}\nu(\sigma^{-1}(I')).
\end{eqnarray*}
Partitioning the sum \eqref{martsum} over the intervals $I'$ at the $(l-1)$th level gives

\begin{eqnarray*}
m^l\Sum_{I'\in U_{l-1}(A)}\Sum_{\substack{I\subset I'\\I\in U_l}}\mu(I)\bE\big(\nu(\sigma^{-1}(I))|\cF_{l-1}\big)&=&
m^l\Sum_{I'\in U_{l-1}(A)}\Sum_{\substack{I\subset I'\\I\in U_l}}m^{-1}\mu(I)\nu(\sigma^{-1}(I'))\\
&=&m^{l-1}\Sum_{I'\in U_{l-1}(A)}\mu(I')\nu(\sigma^{-1}(I'))\\
&=&\tau_{l-1}(A).
\end{eqnarray*}
Clearly $\tau_l(A)\geq0$ for all $l$, so  $\{\tau_l(A), \cF_l\}_{l\geq k}$ is a non-negative martingale.
\end{proof}

In proving  $\cL^2$-boundedness, we will need the following inequality.

\begin{lem}\label{lem:Youngs}
Let $ x_1,x_2,\ldots,x_m\geq 0$ be real numbers. Then
\begin{eqnarray}\label{ineq}
m \Sum_{i\neq j} x_ix_j &\leq& (m-1) \Sum_{i,j} x_ix_j 
\end{eqnarray}
\end{lem}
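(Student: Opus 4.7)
The plan is to reduce \eqref{ineq} to a single instance of the Cauchy--Schwarz inequality. First I would introduce the shorthand $S=\sum_{i=1}^m x_i$ and $T=\sum_{i=1}^m x_i^2$, so that
$$\Sum_{i,j} x_i x_j \;=\; S^2, \qquad \Sum_{i\neq j} x_i x_j \;=\; S^2 - T.$$
Substituting these two identities into \eqref{ineq} and simplifying, the inequality is equivalent to
$$m(S^2-T) \;\leq\; (m-1)S^2, \qquad \text{i.e.} \qquad S^2 \;\leq\; mT.$$

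Next I would observe that the resulting inequality $S^2\leq mT$ is exactly Cauchy--Schwarz applied to the vectors $(x_1,\ldots,x_m)$ and $(1,\ldots,1)$ in $\R^m$:
$$\Big(\Sum_{i=1}^m x_i\cdot 1\Big)^2 \;\leq\; \Big(\Sum_{i=1}^m x_i^2\Big)\Big(\Sum_{i=1}^m 1^2\Big) \;=\; mT.$$
(Equivalently, one could cite the power-mean inequality comparing the arithmetic and quadratic means of the $x_i$; since the $x_i$ are non-negative, the absolute values that appear in Cauchy--Schwarz cause no loss.)

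Chaining these two steps reverses the reduction and yields \eqref{ineq}. There is no real obstacle: the only content is the algebraic splitting of $\sum_{i,j}$ into diagonal and off-diagonal parts, after which Cauchy--Schwarz finishes the job in one line. It may be worth remarking in passing that equality holds exactly when all the $x_i$ are equal, which will be the worst-case scenario when the lemma is later applied to bound the second moments in the $\cL^2$-boundedness argument for the martingale $\tau_l(A)$.
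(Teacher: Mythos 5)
Your proof is correct. It is worth noting that it is organised differently from the paper's: you first split $\sum_{i,j}$ into its diagonal and off-diagonal parts, rewrite the claim as $m(S^2-T)\leq(m-1)S^2$, and reduce it to the single inequality $S^2\leq mT$, which is Cauchy--Schwarz against the all-ones vector. The paper instead starts from Young's inequality $x_ix_j\leq\tfrac12 x_i^2+\tfrac12 x_j^2$, sums it over the off-diagonal pairs only to get $\sum_{i\neq j}x_ix_j\leq(m-1)\sum_i x_i^2$, and then uses the splitting $m\sum_{i\neq j}=\sum_{i\neq j}+(m-1)\sum_{i\neq j}$ to assemble the result. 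The two arguments rest on the same underlying fact (the off-diagonal sum is controlled by the diagonal one), but your reduction is arguably cleaner and makes the equality case (all $x_i$ equal) transparent, whereas the paper's version avoids introducing the auxiliary quantities $S$ and $T$ and works directly with the sums as they appear in the later $\cL^2$ estimate. Both are complete one-line proofs; neither requires the hypothesis $x_i\geq 0$, which is only relevant to the application.
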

\begin{proof}
Young's Inequality implies that $x_ix_j \leq \frac{1}{2} x_i^2+ \frac{1}{2}x_j^2$ for each pair $i$ and $j$.  By summing over all pairs  such that $i\neq j$, we see that
\begin{eqnarray*}
\Sum_{i\neq j} x_ix_j &\leq& (m-1)\Sum_{i} x_i^2
\end{eqnarray*}
and therefore
\begin{eqnarray*}
m \Sum_{i\neq j} x_ix_j &=& \Sum_{i\neq j} x_ix_j + (m-1) \Sum_{i\neq j} x_ix_j\\
&\leq& (m-1) \Sum_{i=1}^m x_i^2 + (m-1) \Sum_{i\neq j} x_ix_j\\
&=& (m-1)\Sum_{i,j}x_ix_j.
\end{eqnarray*}
\end{proof}
\begin{lem}\label{lem:boundedL2}
Assume that $\alpha+\beta>-\log m /\log r $. There is a constant $c_0$ such that for all $A \in U_k$ and $l \geq k$,
\begin{equation}\label{squarebound}
\bE\big(\tau_l(A)^2\big)\leq c_0\mu(A) r^{k(\alpha+\beta+\log m / \log r )}.
\end{equation}
In particular, the martingale $\{\tau_l(A),\cF_{l}\}_{l\geq k}$ is $\cL^2$-bounded. \end{lem}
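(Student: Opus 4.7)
The plan is to expand $\tau_l(A)^2$ into a double sum over pairs of level-$l$ intervals in $A$ and to group the terms by the level $j$ of the nearest common ancestor (NCA). Writing $s=\alpha+\beta+\log m/\log r > 0$,
\[
\bE(\tau_l(A)^2) = m^{2l} \Sum_{I,J \in U_l(A)} \mu(I)\mu(J)\,\bE\!\left[\nu(\sigma^{-1}(I))\nu(\sigma^{-1}(J))\right].
\]
For each $k \leq j \leq l$, I would collect the pairs $(I,J) \in U_l(A)^2$ whose NCA lies at level $j$: the case $j=l$ is the diagonal $I=J$, while $j<l$ consists of pairs descending from distinct children of some $K \in U_j(A)$. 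The aim is to show that each level $j$ contributes at most a constant multiple of $\mu(A)\,r^{js}$, and then sum the resulting geometric series.

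The diagonal is easy: Frostman gives $\mu(I)^2 \leq c_E r^{l\alpha} \mu(I)$, and because $\sigma^{-1}(I)$ is uniformly distributed over $U_l$ one has $\bE[\nu(\sigma^{-1}(I))^2] \leq c_F r^{l\beta}\,\bE[\nu(\sigma^{-1}(I))] = c_F r^{l\beta} m^{-l}$. Summing over $I \in U_l(A)$ bounds the diagonal by $c_E c_F \mu(A)\, r^{ls} \leq c_E c_F \mu(A)\, r^{ks}$.

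The main work is the off-diagonal term at level $j<l$. Fix $K \in U_j(A)$ with children $K_1,\dots,K_m$ and a pair with $I\subset K_p,\,J\subset K_q,\,p\neq q$; let $I^*, J^*$ be their level-$(j+1)$ ancestors. The key observation is that conditional on $\cF_{j+1}$ the random permutations defining $\sigma$ in the subtrees below $I^*$ and $J^*$ are independent, so $\sigma^{-1}(I)$ is uniform on the level-$l$ descendants of $\sigma^{-1}(I^*)$ and $\sigma^{-1}(J)$ is independently uniform on those of $\sigma^{-1}(J^*)$, whence
\[
\bE\!\left[\nu(\sigma^{-1}(I))\nu(\sigma^{-1}(J)) \,\middle|\, \cF_{j+1}\right] = m^{-2(l-j-1)}\,\nu(\sigma^{-1}(I^*))\,\nu(\sigma^{-1}(J^*)).
\]
Conditional on $\cF_j$ the pair $(\sigma^{-1}(I^*),\sigma^{-1}(J^*))$ is uniformly distributed over ordered pairs of distinct children of $B:=\sigma^{-1}(K)$. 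Averaging over this pair, applying Lemma~\ref{lem:Youngs} to $\nu(B_1),\dots,\nu(B_m)$, then using $\nu(B)^2 \leq c_F r^{j\beta}\nu(B)$ and $\bE\,\nu(B)=m^{-j}$, I would bound the per-pair expectation by a multiple of $c_F r^{j\beta}/m^{2l-j}$. A second application of Lemma~\ref{lem:Youngs} to $\mu(K_1),\dots,\mu(K_m)$ combined with $\mu(K)^2 \leq c_E r^{j\alpha}\mu(K)$ gives $\sum_K\sum_{p\neq q}\mu(K_p)\mu(K_q) \leq \tfrac{m-1}{m}c_E r^{j\alpha}\mu(A)$, and multiplying out the factors collapses the level-$j$ total to a constant multiple of $\mu(A)\,r^{js}$.

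Finally, $\Sum_{j=k}^{l-1} r^{js}$ converges because $s>0$ forces $r^s<1$, giving an off-diagonal bound of the form $c\,\mu(A) r^{ks}/(1-r^s)$ uniform in $l$; adding the diagonal yields \eqref{squarebound} with $c_0$ independent of $l$, which immediately gives $\cL^2$-boundedness. The delicate step is the off-diagonal estimate: marrying the conditional-independence of $\sigma$ across sibling subtrees with a double application of the Young-type inequality of Lemma~\ref{lem:Youngs}---once to the children of $K$ on the $\mu$-side and once to the children of $B=\sigma^{-1}(K)$ on the $\nu$-side---is precisely what absorbs the distinct-pair sums and produces the telescoping factor $r^{js}$ at each level.
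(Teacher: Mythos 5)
Your proof is correct, and at bottom it is the paper's argument with the recursion unrolled. The paper bounds $\bE\big(\tau_l(A)^2\mid\cF_{l-1}\big)$ by $\tau_{l-1}(A)^2+c\,\tau_{l-1}(A)\,r^{l(\alpha+\beta+\log m/\log r)}$ --- splitting pairs $I,J\in U_l(A)$ into distinct-parent, same-parent-but-distinct, and diagonal cases --- and then iterates backwards to level $k$ via the tower property and the martingale identity (the induction \eqref{indhyp}), before taking unconditional expectations and applying the Frostman bounds at level $k$. Unwinding that induction produces exactly your stratification of the double sum by the level $j$ of the nearest common ancestor: the paper's three cases are your strata $j<l-1$ (deferred to the next step of the recursion), $j=l-1$, and the diagonal $j=l$, and each stratum contributes a constant times $\mu(A)r^{js}$ with $s=\alpha+\beta+\log m/\log r>0$, the geometric series doing the rest. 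The ingredients are identical: conditional independence of $\sigma$ across sibling subtrees, Lemma~\ref{lem:Youngs} to pass from sums over distinct pairs to sums over all pairs, and the Frostman bounds \eqref{frost}. What your version buys is the avoidance of the backward induction and tower-property bookkeeping; what it costs is that you must justify the multi-level conditional independence identity $\bE\big[\nu(\sigma^{-1}(I))\nu(\sigma^{-1}(J))\mid\cF_{j+1}\big]=m^{-2(l-j-1)}\nu(\sigma^{-1}(I^*))\nu(\sigma^{-1}(J^*))$ in one go rather than one level at a time; this does follow from the product structure of $\bP$ over disjoint subtrees, which is the same fact the paper invokes at a single level, so it is a presentational rather than a substantive difference. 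Both routes give a constant $c_0$ depending on $\alpha,\beta$ only through $c_Ec_F$ and the factor $1/(1-r^{s})$, which is all the lemma requires.
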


\begin{proof}
Let $A \in U_k$. We will first bound $\bE\big(\tau_l(A)^2|\cF_{l-1}\big)$ in terms of $\tau_{l-1}(A)$  where $l \geq k+1$, to obtain \eqref{start} below. As before, we make the convention that $I'\in U_{l-1}$ is the parent interval of $I\in U_l$.

The expectation of $\tau_l(A)^2$ conditional on $\cF_{l-1}$ breaks down into three sums:
\begin{eqnarray}
\bE\big(\tau_l(A)^2|\cF_{l-1}\big)&=&m^{2l}\Sum_{I,J\in U_l(A)}\mu(I)\mu(J)\bE\Big(\nu(\sigma^{-1}(I))\nu(\sigma^{-1}(J))\big|\cF_{l-1}\Big)\nonumber\\
&=&m^{2l}\Sum_{\substack{I',J'\in U_{l-1}(A)\\I'\neq J'}}\Sum_{\substack{I\subset I'\\J\subset J'}}
\mu(I)\mu(J)\bE\Big(\nu(\sigma^{-1}(I))\nu(\sigma^{-1}(J))\big|\cF_{l-1}\Big) \label{eqn:L2sum1}\\
&&+ \ m^{2l}\Sum_{I'\in U_{l-1}(A)}\Sum_{\substack{I,J\subset I'\\I\neq J}}
\mu(I)\mu(J)\bE\Big(\nu(\sigma^{-1}(I))\nu(\sigma^{-1}(J))\big|\cF_{l-1}\Big) \label{eqn:L2sum2}\\
&&+ \ m^{2l}\Sum_{I'\in U_{l-1}(A)}\Sum_{I\subset I'}
\mu(I)^2\bE\Big(\nu(\sigma^{-1}(I))^2\big|\cF_{l-1}\Big). \label{eqn:L2sum3}
\end{eqnarray}
We estimate the expectation term in (\ref{eqn:L2sum1}), (\ref{eqn:L2sum2}), and
(\ref{eqn:L2sum3}) separately.\\
\\
{\it Case 1}: The sum in (\ref{eqn:L2sum1}) is over intervals $I,J\in U_l$ with different parent intervals,
$I',J'\in U_{l-1}$ respectively.
This affords independence in the calculation of conditional expectation, so
\begin{eqnarray*}
\bE\Big(\nu(\sigma^{-1}(I))\nu(\sigma^{-1}(J))\big|\cF_{l-1}\Big)&=&
\bE\Big(\nu(\sigma^{-1}(I))\big|\cF_{l-1}\Big)\bE\Big(\nu(\sigma^{-1}(J))\big|\cF_{l-1}\Big).
\end{eqnarray*}
Given $\cF_{l-1}$, $\sigma^{-1}(I)$ is equally likely to be any one of the $m$ intervals $I_0\in U_{l}$ that are children of $\sigma^{-1}(I')$, so
\begin{eqnarray*}
\bE\Big(\nu(\sigma^{-1}(I))\big|\cF_{l-1}\Big)&=&\Sum_{\substack{I_0\subset \sigma^{-1}(I')\\I_0\in U_{l}}}\frac{\nu(I_0)}{m}
\ = \ \frac{\nu\big(\sigma^{-1}(I')\big)}{m},
\end{eqnarray*}
with a similar expression for the term involving $\sigma^{-1}(J)$.
The expected value in (\ref{eqn:L2sum1}) then becomes
\begin{eqnarray*}
\bE\Big(\nu(\sigma^{-1}(I))\nu(\sigma^{-1}(J))\big|\cF_{l-1}\Big)&=&\frac{\nu(\sigma^{-1}(I'))\nu(\sigma^{-1}(J'))}{m^2}.
\end{eqnarray*}
{\it Case 2}: The sum in (\ref{eqn:L2sum2}) is over two disjoint intervals with the same parent interval, $I'\in U_{l-1}$.  The pair of intervals, $\sigma^{-1}(I)$ and $\sigma^{-1}(J)$, is equally likely to be any of the $m(m-1)$ pairs of distinct
children $I_0$ and $J_0$ of  $\sigma^{-1}(I')\in U_{l-1}$, and using 
\eqref{ineq},
\begin{eqnarray*}
\bE\Big(\nu(\sigma^{-1}(I))\nu(\sigma^{-1}(J))\big|\cF_{l-1}\Big)
&=& \Sum_{\substack{I_0,J_0\subset \sigma^{-1}(I')\\I_0\neq J_0}}\nu(I_0)\nu(J_0)\frac{1}{m(m-1)}\\
&\leq& \Sum_{I_0,J_0\subset \sigma^{-1}(I')}\nu(I_0)\nu(J_0)\frac{1}{m^{2}}\\
&=&\frac{\nu(\sigma^{-1}(I'))^2}{m^2}.
\end{eqnarray*}
{\it Case 3}: The sum in (\ref{eqn:L2sum3}) is over intervals $I$ with parent interval $I'$, and $\sigma^{-1}(I)$ is equally likely to be any of the $m$ children of $\sigma^{-1}(I')$, say $I_0$. Combining this with the inequality $\nu(I_0)\leq c_F r^{l\beta}$ from
\eqref{frost},
\begin{eqnarray*}
\bE\Big(\nu(\sigma^{-1}(I))^2\big|\cF_{l-1}\Big)
&=&\Sum_{I_0\subset \sigma^{-1}(I')}\nu(I_0)^2m^{-1}\\
&\leq&\Sum_{I_0\subset \sigma^{-1}(I')}c_Fr^{l\beta}\nu(I_0)m^{-1}\\
&=&\frac{c_F r^{l\beta}\nu(\sigma^{-1}(I'))}{m}.
\end{eqnarray*}
Incorporating these three cases in \eqref{eqn:L2sum1}--\eqref{eqn:L2sum3} and using that $\mu(I)\leq c_E r^{l\alpha}$ for every $I \in U_l$,
\begin{eqnarray}
\bE\big(\tau_l(A)^2|\cF_{l-1}\big)&\leq&m^{2l}\Sum_{I',J'\in U_{l-1}(A)}\Sum_{\substack{I\subset I'\\J\subset J'}}
\mu(I)\mu(J)\frac{\nu(\sigma^{-1}(I'))\nu(\sigma^{-1}(J'))}{m^2}\nonumber\\
&&+ \ m^{2l}\Sum_{I'\in U_{l-1}(A)}\Sum_{I\subset I'}
\mu(I)c_E r^{l\alpha}\frac{c_Fr^{l\beta}\nu(\sigma^{-1}(I'))}{m}\nonumber\\
&=&m^{2(l-1)}\Sum_{I',J'\in U_{l-1}(A)}\mu(I')\mu(J')\nu(\sigma^{-1}(I'))\nu(\sigma^{-1}(J'))\nonumber\\
&&+ \ c_Ec_Fr^{l\alpha}r^{l\beta}m^{l}m^{l-1}\Sum_{I'\in U_{l-1}(A)}\mu(I')\nu(\sigma^{-1}(I'))
\nonumber\\
&=&\tau_{l-1}(A)^2+c\tau_{l-1}(A)r^{l(\alpha+\beta+\log m / \log r )},\label{start}
\end{eqnarray}
where $c=c_Ec_F$. 

We apply this inequality inductively (working backwards) to bound $\bE\big(\tau_l(A)^2|\cF_{k}\big)$ where $A \in U_k$. Assume that for some $j$ with 
$k+1\leq j\leq l-1$,
\begin{equation}\label{indhyp}
\bE\big(\tau_l(A)^2|\cF_{j}\big)
 \leq \tau_{j}(A)^2+c\tau_{j}(A)
\sum_{i=j+1}^{l}r^{i(\alpha+\beta+\log m / \log r )};
\end{equation}
when $j=l-1$ this is just \eqref{start}. Using the tower property for conditional expectation, inequalities \eqref{indhyp},  \eqref{start} (with $j$ playing the role of $l$),  and that $\tau_j$ is a martingale,
\begin{eqnarray*}
\bE\big(\tau_l(A)^2|\cF_{j-1}\big) &=& \bE\big(\bE(\tau_l(A)^2|\cF_{j})|\cF_{j-1}\big)\\
&\leq&\bE\big(\tau_{j}(A)^2|\cF_{j-1}\big)+c\bE\big(\tau_{j}(A)|\cF_{j-1}\big)
\sum_{i=j+1}^{l}r^{i(\alpha+\beta+\log m / \log r )}\\
&\leq&\tau_{j-1}(A)^2 + c\tau_{j-1}(A)
\sum_{i=j}^{l}r^{i(\alpha+\beta+\log m / \log r )},
\end{eqnarray*}
for the inductive step.
Taking $j=k$ in  \eqref{indhyp} and recalling that $\alpha+\beta+\log m / \log r>0$, we conclude that
\begin{equation}\label{geoser}
\bE\big(\tau_l(A)^2|\cF_{k}\big)\leq
\tau_{k}(A)^2 + c_1 \tau_{k}(A)r^{k(\alpha+\beta+\log m / \log r )},
\end{equation}
where $c_1$ does not depend on $l,k$ or $A$.

With $A \in U_k$ as before, we take unconditional expectations of this inequality, and use \eqref{martdef} and \eqref{frost}: 
\begin{eqnarray}
\bE\big(\tau_l(A)^2\big) &\leq& \bE\big(\tau_{k}(A)^2\big)+c_1 \bE\big(\tau_{k}(A)\big) r^{k(\alpha+\beta+\log m / \log r)}\nonumber\\
&=& m^{2k}\mu(A)^2\bE\big(\nu(\sigma^{-1}(A))^2\big) + 
c_1m^k\mu(A)\bE\big(\nu(\sigma^{-1}(A))\big) r^{k(\alpha+\beta+\log m / \log r )} \nonumber\\
&=& m^{2k}\mu(A)^2\Sum_{I\in U_k}\nu(I)^2m^{-k}
+ c_1m^k\mu(A)\Sum_{I\in U_k}\nu(I)m^{-k} r^{k(\alpha+\beta+\log m /\log r )} \nonumber\\
&\leq& c_Ec_F m^k\mu(A)\Sum_{I\in U_k}\nu(I)r^{k(\alpha+\beta)}+ c_1\mu(A)r^{k(\alpha+\beta+\log m /\log r )} \nonumber\\
&\leq&c_0\mu(A) r^{k(\alpha+\beta+\log m / \log r )}, \label{eqn:L2expectbdd}
\end{eqnarray}
where $c_0 = c_Ec_F +c_1$.
\end{proof}

We now use the $\tau_l$ to obtain a limiting measure.  First let $A\in{\mathcal S}(U_k)$, the sigma-algebra of subsets of $\cC^m$ generated by
the $k$th level intervals, so $A$ is a (finite) union of intervals in $U_k$.  For $l\geq k$ define
\begin{eqnarray*}
\tau_l(A) &=& m^l\Sum_{I\in U_l(A)}\mu(I)\nu(\sigma^{-1}(I)).
\end{eqnarray*}
Note that when $A\in U_k$ this coincides with the definition of $\tau_l(A)$  given by \eqref{martdef}.
For all  $k$ and all $A\in{\mathcal S}(U_k)$,  $\{\tau_l(A), \cF_l\}_{l\geq k}$  is a martingale as a finite sum of martingales. Thus $\tau_l(A)$ converges almost surely to a random variable on the sigma-field
 $\cF= {\mathcal S}(\bigcup_{k=0}^\infty\cF_k)$,
so we may define,
for  all $A\in\bigcup_{k=0}^\infty{\mathcal S}(U_k)$, 
\begin{eqnarray}
\tau(A)=\lim_{l\to\infty}\tau_l(A), \label{def:taulimit}
\end{eqnarray}
the limit existing almost surely for all $A\in\bigcup_{k=0}^\infty{\mathcal S}(U_k)$ simultaneously.

Let $A, B\in \bigcup_{k=0}^\infty{\mathcal S}(U_k)$ be disjoint, so that $A, B\in {\mathcal S}(U_k)$ for some $k$.  Then, for $l\geq k$, $\tau_l(A\cup B)= \tau_l(A)+\tau_l(B)$.  Taking limits 
gives $\tau(A\cup B)=\tau(A)+\tau(B)$, so almost surely, $\tau$ is a finitely additive set function on 
$\bigcup_{k=0}^\infty{\mathcal S}(U_k)$.
Since $\{\tau_l(\cC^m), \cF_l\}_{l\geq 0}$ is a non-negative martingale, $\tau_l(\cC^m)<\infty$ almost surely.
By the extension theorems, see \cite{Taylor},
almost surely $\tau$ has a unique extension to  ${\mathcal S}\big(\bigcup_{k=0}^\infty{\mathcal S}(U_k)\big)$, i.e. $\tau$ is a random Borel
measure on $\cC^m$.

\begin{prop}\label{propmes}
The support of $\tau$ is contained in $E\cap\sigma(F)$, with $\tau(\cC^m)<\infty$ almost surely and $\tau(\cC^m)>0$ with positive probability. Moreover, for all $k \geq 0$ and $A\in U_k$,
\begin{equation}\label{tausquare}
\bE\big(\tau(A)^2\big)\leq c_0\mu(A) r^{k(\alpha+\beta+\log m / \log r )}.
\end{equation}
\end{prop}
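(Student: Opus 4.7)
The plan is to verify the three assertions---support containment, finiteness together with positivity of $\tau(\cC^m)$, and the second-moment bound \eqref{tausquare}---each by exploiting the $\cL^2$-boundedness established in Lemma~\ref{lem:boundedL2}. Since $\mu$ and $\nu$ have compact supports contained in $E$ and $F$ by Lemma~\ref{lem:measurebounds}, the containment $\mathrm{supp}(\tau) \subset E\cap\sigma(F)$ reduces to showing $\mathrm{supp}(\tau) \subset \mathrm{supp}(\mu) \cap \sigma(\mathrm{supp}(\nu))$.

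For the support claim I would argue pointwise: given $x \notin \mathrm{supp}(\mu) \cap \sigma(\mathrm{supp}(\nu))$, use that these sets are closed and that the cylinders form a basis for the ultrametric topology on $\cC^m$ to produce some $I \in \cU$ with $x \in I$ and either $I \cap \mathrm{supp}(\mu) = \emptyset$ or $\sigma^{-1}(I) \cap \mathrm{supp}(\nu) = \emptyset$. In either case every subinterval $J \subset I$ contributes zero to the defining sum \eqref{martdef}, so $\tau_l(I) = 0$ for every $l$ past the level of $I$; hence $\tau(I) = 0$ and $x \notin \mathrm{supp}(\tau)$.

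The finiteness and positivity of $\tau(\cC^m)$ come from specialising to $A = \cC^m \in U_0$. Then $\tau_0(\cC^m) = \mu(\cC^m)\nu(\sigma^{-1}(\cC^m)) = 1$ and Lemma~\ref{lem:boundedL2} with $k = 0$ yields $\bE(\tau_l(\cC^m)^2) \leq c_0$ uniformly in $l$. The non-negative martingale $\{\tau_l(\cC^m), \cF_l\}_{l\geq 0}$ is therefore $\cL^2$-bounded, converges almost surely and in $\cL^2$ to $\tau(\cC^m)$, gives $\tau(\cC^m) < \infty$ almost surely, and preserves the mean so that $\bE(\tau(\cC^m)) = 1 > 0$; in particular $\tau(\cC^m) > 0$ with positive probability. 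The bound \eqref{tausquare} follows in the same spirit: for any $A \in U_k$, Lemma~\ref{lem:boundedL2} bounds $\bE(\tau_l(A)^2)$ uniformly in $l$ by $c_0 \mu(A) r^{k(\alpha+\beta+\log m/\log r)}$, the martingale $\{\tau_l(A),\cF_l\}_{l\geq k}$ converges in $\cL^2$, and $\cL^2$-convergence transfers the bound to $\bE(\tau(A)^2)$ (alternatively one could simply invoke Fatou on $\tau_l(A)^2$).

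The bulk of the analytic difficulty has already been absorbed into Lemma~\ref{lem:boundedL2}, so the main point requiring genuine care here is the topological support statement: $E$ and $F$ are only assumed Borel and need not be closed, so one cannot speak directly of intersecting $\mathrm{supp}(\tau)$ with them. Routing through the compact supports of the Frostman measures together with the clopen cylinder basis of $\cC^m$ is the natural way around this, and it is what makes the vanishing of $\tau$ outside $E \cap \sigma(F)$ robust.
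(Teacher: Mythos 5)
Your proposal is correct and follows essentially the same route as the paper: the support claim via a cylinder neighbourhood on which the defining sum of $\tau_l$ vanishes (using the compact supports of the Frostman measures), finiteness from non-negative martingale convergence, positivity from $\cL^2$-boundedness preserving the mean $\bE(\tau_0(\cC^m))=1$, and \eqref{tausquare} by passing the uniform bound of Lemma~\ref{lem:boundedL2} to the limit via $\cL^2$-convergence.
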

\begin{proof}
Let $x\notin E\cap\sigma(F)$ but $x\in \cC^m$.  Since $\mu$ and $\nu$ have support on compact subsets of   $E$ and $F$ respectively, either $x\notin\text{supp}(\mu)$ or 
$\sigma^{-1}(x)\notin\text{supp}(\nu)$.  Without loss of generality, assume $x\notin\text{supp}(\mu)$.  Then there exists an open neighborhood of $x$ that does not intersect
$\text{supp}(\mu)$, which we may take to be an interval $A\in U_k$ for some $k$. Then by \eqref{martdef}, for all $l\geq k$, $\tau_l(A)=0$, so $\tau(A)=0$ and $x$ is not in the support of $\tau$.

Since  $\{\tau_l(\cC^m), \cF_l\}_{l\geq 0}$ is a non-negative martingale $0 \leq \tau (\cC^m)<\infty$ almost surely, and, since it is $\cL^2$-bounded, $\tau(\cC^m)>0$ with positive probability. Since $\cL^2$-bounded martingales converge in $\cL^2$, \eqref{tausquare} follows from \eqref{squarebound}.

\end{proof}

The $s$-energy of a measure $\upsilon$ is defined as $I_s(\upsilon)=\Int\Int\frac{\mathrm{d}\upsilon(x)\mathrm{d}\upsilon(y)}{d(x,y)^s}$.
We use the following variation of the potential theoretic method to bound the Hausdorff dimension of $E\cap\sigma(F)$, see
\cite[Section 4.3]{FractalGeometry}  and \cite[Chapter 8]{MatBook}.
\begin{thm}\label{thm:ptm}
Let $F$ be a Borel subset of $\cC^m$ and $\upsilon$ a measure with support in $F$ and $0< \upsilon(F)<\infty$. If $I_s(\upsilon)<\infty$,
then $\dim_H(F)\geq s$.
\end{thm}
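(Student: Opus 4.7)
The plan is to show that $I_s(\upsilon) < \infty$ forces $\cM^s(F) > 0$, whence Lemma \ref{equivmes} gives $\dim_H F \geq s$. Following the classical potential-theoretic argument (as in \cite[Chapter 8]{MatBook}), the main tool is the $s$-potential $\phi(x) := \int d(x,y)^{-s}\, d\upsilon(y)$; the ultrametric structure of $\cC^m$ will simplify several of the usual estimates.

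First, by Fubini, $\int \phi\, d\upsilon = I_s(\upsilon) < \infty$, so $\phi$ is finite $\upsilon$-almost everywhere. Choose $M$ large enough that the Borel set $F_M := \{x \in F : \phi(x) \leq M\}$ has $\upsilon(F_M) > 0$, and let $\upsilon_1$ denote the restriction of $\upsilon$ to $F_M$. Then $\upsilon_1$ is a finite Borel measure with $\upsilon_1(\cC^m) = \upsilon(F_M) > 0$, has no atoms in $F_M$ (an atom $\{x\}\subset F_M$ would make $\phi(x)$ infinite), and satisfies $\int d(x,y)^{-s}\, d\upsilon_1(y) \leq \phi(x) \leq M$ for every $x \in F_M$.

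The key step is an annular estimate exploiting the ultrametric. For $x \in F_M$ and $I \in U_k$ containing $x$, let $I_j \in U_{k+j}$ denote the level-$(k+j)$ interval containing $x$ (so $I_0 = I$), and put $A_j := I_j \setminus I_{j+1}$. Because $d$ is an ultrametric, every $y \in A_j$ lies at exact distance $r^{k+j}$ from $x$, giving
\begin{equation*}
r^{-s(k+j)}\, \upsilon_1(A_j) = \int_{A_j} \frac{d\upsilon_1(y)}{d(x,y)^s} \leq M.
\end{equation*}
Summing the resulting geometric series and using $\upsilon_1(\{x\}) = 0$ yields $\upsilon_1(I) \leq C\, d(I)^s$ with $C := M/(1-r^s)$; the same bound holds trivially when $I$ is disjoint from $F_M$ since then $\upsilon_1(I) = 0$.

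Finally, any cover of $F$ by intervals $\{I_i\}\subset \cU$ (which suffices for $\cM^s$ by Lemma \ref{equivmes}) satisfies
\begin{equation*}
C \sum_i d(I_i)^s \geq \sum_i \upsilon_1(I_i) \geq \upsilon_1(F) = \upsilon(F_M) > 0,
\end{equation*}
so $\cM^s(F) \geq \upsilon(F_M)/C > 0$ and hence $\dim_H F \geq s$. The main obstacle is the annular estimate, and even this is cleaner here than in the Euclidean setting: the ultrametric makes $d(x,y)$ exactly $r^{k+j}$ on each annulus, so the bound on $\upsilon_1(I)$ follows from an exact geometric series rather than a dyadic comparison with lossy constants.
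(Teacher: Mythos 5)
The paper does not actually prove this theorem: it is quoted as a standard potential-theoretic fact with pointers to \cite[Section 4.3]{FractalGeometry} and \cite[Chapter 8]{MatBook}, so there is no internal proof to compare against. Your argument is a correct, self-contained rendering of exactly that classical argument, transplanted to the ultrametric setting: restrict $\upsilon$ to a set $F_M$ on which the potential $\phi$ is bounded by $M$, derive the Frostman-type bound $\upsilon_1(I)\le C\,d(I)^s$ for every interval, and conclude via the mass distribution principle applied to the interval premeasure $\cM^s$ of Lemma \ref{equivmes}. The steps all check out: $\phi<\infty$ $\upsilon$-a.e.\ by Tonelli; non-atomicity of $\upsilon_1$ on $F_M$ is genuinely needed, and correctly invoked, to write $\upsilon_1(I)=\sum_{j\ge 0}\upsilon_1(A_j)$, since $\bigcap_j I_j=\{x\}$; each $y\in A_j$ is at distance exactly $r^{k+j}$ from $x$ because $x$ and $y$ agree on the first $k+j$ symbols and differ at the next one; and the case $I\cap F_M=\emptyset$ is handled trivially. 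Two small points worth making explicit: the geometric series $\sum_j r^{s(k+j)}$ converges only because $s>0$ (for $s=0$ the statement is vacuous, so nothing is lost), and the final display really shows $\cM^s_\delta(F)\ge \upsilon(F_M)/C$ uniformly in $\delta$, which is what gives $\cM^s(F)>0$. The ultrametric does indeed replace the usual dyadic annulus comparison with an exact computation, which is a genuine (if modest) simplification over the Euclidean proof the paper cites.
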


To use this theorem, we find the expected value of $I_s(\tau)$, where $\tau$ is the random measure on $E\cap\sigma(F)$ constructed above.

\begin{lem}\label{lem:ptm}
Let $0<s< \alpha+\beta+\log m / \log r$. 
Then \[\bE\Big(\Int\Int\frac{\dt(x)\dt(y)}{d(x,y)^s}\Big)<\infty.\]
\end{lem}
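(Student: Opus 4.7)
The plan is to exploit the ultrametric structure to decompose the energy integral by the level of the smallest common cylinder, and then apply the second-moment bound \eqref{tausquare} from Proposition \ref{propmes}. The key observation is that for distinct $x,y \in \cC^m$, there is a unique integer $k_0 \geq 0$ such that $x$ and $y$ lie in a common level-$k_0$ interval but in distinct level-$(k_0+1)$ intervals; for this $k_0$, $d(x,y) = r^{k_0}$. In particular, $d(x,y) \leq r^k$ if and only if $x$ and $y$ lie in the same level-$k$ interval, so
\begin{equation*}
r^{-k_0 s} \;\leq\; \sum_{k=0}^{\infty} r^{-ks}\, \chi_{\{x,y \in I\ \text{for some}\ I\in U_k\}}(x,y).
\end{equation*}

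Integrating against $\tau\otimes\tau$ (off the diagonal) and interchanging sum and integral gives
\begin{equation*}
\Int\Int_{x\neq y} \frac{\dt(x)\dt(y)}{d(x,y)^s} \;\leq\; \Sum_{k=0}^{\infty} r^{-ks} \Sum_{I \in U_k} \tau(I)^2.
\end{equation*}
Taking expectations and applying \eqref{tausquare} from Proposition \ref{propmes},
\begin{equation*}
\bE\Big(\Sum_{I \in U_k} \tau(I)^2\Big) \;\leq\; c_0\, r^{k(\alpha+\beta+\log m / \log r)} \Sum_{I \in U_k} \mu(I) \;=\; c_0\, r^{k(\alpha+\beta+\log m / \log r)},
\end{equation*}
since $\mu$ is a probability measure. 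Thus
\begin{equation*}
\bE\Big(\Int\Int_{x\neq y}\frac{\dt(x)\dt(y)}{d(x,y)^s}\Big) \;\leq\; c_0 \Sum_{k=0}^{\infty} r^{k(\alpha+\beta+\log m/\log r - s)},
\end{equation*}
and the geometric series converges precisely because $s<\alpha+\beta+\log m/\log r$, i.e. the exponent $\alpha+\beta+\log m/\log r - s$ is strictly positive and $0<r<1$.

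The only remaining issue is the diagonal $\{x=y\}$, which could in principle contribute infinite mass if $\tau$ has atoms. However, the bound just obtained applied with any $s>0$ shows that $\bE\big(\Sum_{I\in U_N}\tau(I)^2\big)\to 0$ as $N\to\infty$, so almost surely $\Sum_{I\in U_N}\tau(I)^2\to 0$; since this sum dominates $(\tau\otimes\tau)$ of the diagonal, $\tau$ is almost surely non-atomic and the diagonal can be ignored. The principal step is the ultrametric decomposition in the first display; everything after that is a routine combination of \eqref{tausquare} with the geometric series, and I do not foresee any significant obstacle.
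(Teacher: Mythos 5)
Your proof is correct and follows essentially the same route as the paper: decompose the energy integral according to the level of the smallest common cylinder of $x$ and $y$, bound the level-$k$ contribution by $r^{-ks}\sum_{I\in U_k}\tau(I)^2$, and combine the second-moment estimate \eqref{tausquare} with $\sum_{I\in U_k}\mu(I)=1$ to obtain a convergent geometric series. Your explicit treatment of the diagonal (deducing that $\tau$ is almost surely non-atomic from $\bE\big(\sum_{I\in U_N}\tau(I)^2\big)\to 0$) addresses a point the paper passes over silently and is a worthwhile addition.
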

\begin{proof}
For $x,y \in \cC^m$, we write $x\wedge y$ for the smallest interval $I$ such that $x,y\in I$.
We split the integral up into domains $\{x,y: x\wedge y \in I\}$ for each $I \in \cU$ and then use \eqref{tausquare}.

\begin{eqnarray*}
\bE\Big(\Int\Int\frac{\dt(x)\dt(y)}{d(x,y)^s}\Big) &\leq& 
\bE\Big(\sum_{k=0}^\infty\sum_{I\in U_k}\Int\Int_{x\wedge y=I}\frac{\dt(x)\dt(y)}{d(x,y)^s}\Big)\\
&\leq& \sum_{k=0}^\infty\sum_{I\in U_k}\bE\Big(r^{-ks}\Int\Int_{x\wedge y=I}\dt(x)\dt(y)\Big)\\
&\leq& \sum_{k=0}^\infty r^{-sk}\sum_{I\in U_k} \bE\big(\tau(I)^2\big)\\
&\leq&  c_0\sum_{k=0}^\infty r^{-sk}\sum_{I\in U_k}\mu(I) r^{k(\alpha+\beta+\log m / \log r )}\\
&\leq&  c_0\sum_{k=0}^\infty r^{k(\alpha+\beta+\log m / \log r -s)}\\
&<&\infty,
\end{eqnarray*}
since $\alpha+\beta+\log m / \log r -s>0$.
\end{proof}
Our final theorem now follows from the potential theoretic characterization of Hausdorff dimension.

\begin{thm}\label{hausineq}
Let $E$ and $F$ be Borel subsets of $\cC^m$. For all $\epsilon>0$,
\begin{equation}\label{epineq}
\dim_H(E\cap\sigma(F))>\dim_H E+\dim_H F+\frac{\log m }{\log r }-\epsilon 
\end{equation}
with positive probability.
\end{thm}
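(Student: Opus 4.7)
The plan is to harvest the work done above: Proposition \ref{propmes} provides a random Borel measure $\tau$ on $\cC^m$ whose support is contained in $E\cap\sigma(F)$ and with $\tau(\cC^m)>0$ on an event of positive probability; Lemma \ref{lem:ptm} shows that its $s$-energy has finite expectation for $s<\alpha+\beta+\log m/\log r$; and Theorem \ref{thm:ptm} is the potential-theoretic machinery for converting a finite-energy measure on a set into a lower bound for that set's Hausdorff dimension. All that remains is a parameter chase to align the exponent $s$ with the target exponent $\dim_H E+\dim_H F+\log m/\log r-\epsilon$.

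Fix $\epsilon>0$ and assume the non-trivial case $\dim_H E+\dim_H F+\log m/\log r>0$; otherwise the inequality is either vacuous or reduces to showing that $E\cap\sigma(F)$ is nonempty with positive probability, which likewise follows from $\tau(\cC^m)>0$ once the construction can be run. Choose $\alpha$ with $0<\alpha<\dim_H E$ and $\beta$ with $0<\beta<\dim_H F$ so close to the corresponding Hausdorff dimensions that $\alpha+\beta>\dim_H E+\dim_H F-\epsilon/2$; in particular $\alpha+\beta+\log m/\log r>0$, so the hypothesis of Lemma \ref{lem:boundedL2} is met and the entire construction of $\tau$ is valid for these parameters.

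Now pick $s$ in the non-empty open interval
\[
\dim_H E+\dim_H F+\tfrac{\log m}{\log r}-\epsilon \ < \ s \ < \ \alpha+\beta+\tfrac{\log m}{\log r},
\]
the right endpoint exceeding the left by the choice of $\alpha+\beta$ above. By Lemma \ref{lem:ptm}, $\bE\big(I_s(\tau)\big)<\infty$, and consequently $I_s(\tau)<\infty$ almost surely. Let $\Omega_0=\{\tau(\cC^m)>0\}$; by Proposition \ref{propmes}, $\bP(\Omega_0)>0$. On the event $\Omega_0\cap\{I_s(\tau)<\infty\}$, which has the same positive probability as $\Omega_0$, the measure $\tau$ is a non-zero finite Borel measure supported in $E\cap\sigma(F)$ whose $s$-energy is finite, so Theorem \ref{thm:ptm} applied to $\tau$ on $F:=E\cap\sigma(F)$ yields
\[
\dim_H(E\cap\sigma(F))\ \geq\ s\ >\ \dim_H E+\dim_H F+\frac{\log m}{\log r}-\epsilon,
\]
with positive probability, as required.

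The real obstacle in the whole program was the $\cL^2$-estimate of Lemma \ref{lem:boundedL2}, which was needed both to guarantee almost-sure convergence of the martingale and to keep it from collapsing to zero in $\cL^2$; once that was in place, the final theorem is a routine parameter selection combined with the standard potential-theoretic characterization of Hausdorff dimension. The only subtlety to keep in mind is to pick $s$ strictly interior to the interval above so that the conclusion $\dim_H(E\cap\sigma(F))\geq s$ delivers the \emph{strict} inequality claimed in \eqref{epineq}.
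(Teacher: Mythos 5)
Your proposal is correct and follows essentially the same route as the paper: choose $\alpha<\dim_H E$, $\beta<\dim_H F$ and $s<\alpha+\beta+\log m/\log r$, invoke Lemma \ref{lem:ptm} for almost-sure finiteness of the $s$-energy, Proposition \ref{propmes} for $\bP(\tau(\cC^m)>0)>0$, and Theorem \ref{thm:ptm} to conclude $\dim_H(E\cap\sigma(F))\geq s$ on that event. The explicit parameter bookkeeping ($\alpha+\beta>\dim_H E+\dim_H F-\epsilon/2$, then $s$ strictly interior to the resulting interval) is just a slightly more detailed version of the paper's closing sentence.
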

\begin{proof}
Let $0< \alpha <\dim_H E$, $0< \beta <\dim_H F$ and  $0 < s < \alpha+\beta+\log m / \log r$. From Lemma \ref{lem:ptm},  the $s$-energy of $\tau$, $I_s(\tau)$, is
finite almost surely. Provided that $\tau(\cC^m)>0$, which happens with positive probability by Proposition \ref{propmes}, then by Theorem \ref{thm:ptm}
\begin{eqnarray*}
\dim_H(E\cap\sigma(F))\geq s.
\end{eqnarray*}
By choosing $\alpha$ and $\beta$ sufficiently close to $\dim_H E$ and $\dim_H F$ and $s$ close to $\alpha+\beta+\log m / \log r$, we obtain \eqref{epineq} for any given $\epsilon>0$.
\end{proof}

We may rephrase Theorem  \ref{hausineq} as follows, with the case of equality coming from Theorem \ref{hausub}.

\begin{cor}\label{corol}
Let $E$ and $F$ be Borel subsets of $\cC^m$. Then
\[\mathrm{esssup}_{\sigma\in \mathrm{Iso }{\mathcal C}^m}\{\dim_H(E\cap\sigma(F))\}\geq \dim_H E+\dim_H F+\frac{\log m }{\log r }.\]
Equality holds if either $\dim_H E = {\overline \dim}_B\, E$ or  $\dim_H F = {\overline \dim}_B\, F$.
\end{cor}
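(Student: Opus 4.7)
The plan is to obtain the lower bound as a direct unpacking of Theorem \ref{hausineq} through the definition of the essential supremum, and then to derive the matching upper bound in the equality case from Theorem \ref{hausub}, invoking a symmetry argument when it is $E$ rather than $F$ that has coinciding Hausdorff and upper box dimensions.

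For the lower bound, I would translate Theorem \ref{hausineq} into a statement about the essential supremum. Recall that $\mathrm{esssup}_\sigma f(\sigma) \geq v$ precisely when $\bP(f(\sigma) > v - \epsilon) > 0$ for every $\epsilon > 0$. Theorem \ref{hausineq} gives exactly this with $f(\sigma) = \dim_H(E \cap \sigma(F))$ and $v = \dim_H E + \dim_H F + \log m / \log r$: for each $\epsilon > 0$ the event $\{\dim_H(E\cap \sigma(F)) > v - \epsilon\}$ has positive probability, so $\mathrm{esssup}_\sigma \dim_H(E\cap\sigma(F)) \geq v - \epsilon$ for every $\epsilon > 0$, and letting $\epsilon \downarrow 0$ yields the claimed inequality. (If $v < 0$ the inequality is vacuous since Hausdorff dimension is non-negative.)

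For the equality clause, I would apply Theorem \ref{hausub}. If $\dim_H F = \overline{\dim}_B F$, then almost surely $\dim_H(E\cap\sigma(F)) \leq \max\{\dim_H E + \dim_H F + \log m/\log r,\, 0\}$, which matches the lower bound and forces equality in the essential supremum. In the symmetric situation $\dim_H E = \overline{\dim}_B E$, I would appeal to the observation that $E \cap \sigma(F) = \sigma\big(\sigma^{-1}(E) \cap F\big)$ and that $\sigma$ is an isometry, so $\dim_H(E\cap\sigma(F)) = \dim_H(\sigma^{-1}(E)\cap F)$. Since the invariant measure $\bP$ on $\mathrm{Iso}\,\cC^m$ (defined levelwise via uniformly random admissible permutations) is preserved under the inversion $\sigma \mapsto \sigma^{-1}$, the random variable $\dim_H(F\cap\sigma^{-1}(E))$ has the same distribution as $\dim_H(F\cap\sigma(E))$, and Theorem \ref{hausub} with the roles of $E$ and $F$ interchanged supplies the matching upper bound.

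The only genuinely delicate point is verifying that $\bP$ is invariant under inversion; this is the analogue of left/right invariance of Haar measure on the compact group $\mathrm{Iso}\,\cC^m$, and it follows from the fact that the set of admissible permutations of $U_k$ forms a group and $\bP$ assigns equal mass to each permutation. Once this is noted, assembling the corollary is routine bookkeeping, so I expect no substantive obstacle beyond recording the symmetry carefully.
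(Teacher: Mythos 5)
Your proposal is correct and follows essentially the same route as the paper, which simply observes that the lower bound is a rephrasing of Theorem \ref{hausineq} via the definition of the essential supremum and that the equality case comes from Theorem \ref{hausub}. The one place where you go beyond the paper is worth keeping: Theorem \ref{hausub} as stated is asymmetric (it bounds $\dim_H(E\cap\sigma(F))$ by $\dim_H E+\overline{\dim}_B F+\log m/\log r$), so it handles the case $\dim_H F=\overline{\dim}_B F$ directly but not the case $\dim_H E=\overline{\dim}_B E$; the paper glosses over this. Your fix --- writing $\dim_H(E\cap\sigma(F))=\dim_H(\sigma^{-1}(E)\cap F)$ and using that $\bP$ is invariant under $\sigma\mapsto\sigma^{-1}$ (it is the Haar measure on the compact group $\mathrm{Iso}\,\cC^m$, or concretely, $\pi\mapsto\pi^{-1}$ is a bijection of the group of admissible level-$k$ permutations preserving the uniform weights $m^{-k(k+1)/2}$) --- is valid. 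An alternative, equally short, is to note that the proof of Theorem \ref{hausub} runs verbatim with the roles of $E$ and $F$ interchanged, covering $F$ efficiently by intervals $J_j$ and using $\bP(\sigma(J_j)\cap E\neq\emptyset)=m^{-k}|U_k(E)|$, yielding the symmetric bound $\dim_H(E\cap\sigma(F))\leq\max\{\overline{\dim}_B E+\dim_H F+\log m/\log r,0\}$ without invoking inversion invariance.
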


It is natural to ask whether the lower bound  in Corollary \ref{corol} occurs with positive probability rather than just as an essential supremum. The following example shows that this is not true in general.

\begin{example}\label{eg}
For all $0<\alpha, \beta< -\log m/\log r$ with $\alpha + \beta+\log m/\log r> 0$ there exist Borel sets $E$ and $F$ in ${\mathcal C}^m$ such that $\dim_H E = \dim_B E = \alpha$ and $\dim_H F = \dim_B F = \beta$ and
$$\bP\bigg\{\dim_H(E\cap\sigma(F))\geq \dim_H E+\dim_H F+\frac{\log m }{\log r }\bigg\} = 0.$$
\end{example}
\begin{proof}
For each integer $i > 1/\alpha$, choose some interval $I_i \in U_i$ and construct a Borel set $E_i \subset I_i$ such that
$\dim_H E_i = \dim_B E_i = \alpha-1/i$. We may do this using a Cantor-type construction starting with $I_i$ but varying slightly the number of children intervals at each stage to get the required dimension. In doing so we may further ensure that $|U_k(E_i)| \leq r^{-k\alpha}=m^{-k\alpha \log r/\log m}$ for all $k\geq i$. 
Let $E= \bigcup_{i >1/\alpha} E_i$, so $\dim_H E = \alpha$.

In the same way, for $j> 1/\beta$, let $F= \bigcup_{j>1/\beta} F_j$, where $F_j\subset I_j$ for some $I_j \in U_j$ and $\dim_H F_j = \dim_B F_j = \beta-1/j$, with $|U_k(F_j)| \leq m^{-k\beta \log r/\log m}$ for all $k\geq j$. Thus 
$\dim_H F = \beta$. 

By Theorem \ref{thm2}  or  Theorem \ref{hausub}, for each $i > 1/\alpha, j> 1/\beta$,
$$\dim_H(E_i\cap\sigma(F_j))\leq \max\big\{\alpha +\beta + \log m/\log r - 1/i- 1/j, 0\big\}$$
with probability 1. Let $\epsilon >0$. Since $E \cap \sigma(F )= \bigcup_{i >1/\alpha}\bigcup_{j>1/\beta} E_i \cap \sigma(F_j)$,
\begin{align}
\bP\big(\dim_H &(E\cap\sigma(F))>\alpha +\beta + \log m/\log r - \epsilon\big)\nonumber\\
&\leq \sum_{1/i + 1/j  <\epsilon}\bP\big(\dim_H (E_i\cap\sigma(F_j))>\alpha +\beta + \log m/\log r - \epsilon\big)\nonumber\\
&\leq \sum_{1/i + 1/j  <\epsilon}\bP\big(E_i\cap\sigma(F_j)\neq \emptyset\big)\nonumber\\
&\leq \sum_{j\geq i >1/\epsilon}\bP\big(E_i\cap\sigma(F_j)\neq \emptyset\big)
+ \sum_{i\geq j >1/\epsilon}\bP\big(E_i\cap\sigma(F_j)\neq \emptyset\big)\label{sum}
\end{align}
For $j\geq i$, by construction $E_i$ is contained in at most 
$m^{-j\alpha \log r/\log m}$ intervals of $U_j$,
so 
$$\bP\big(E_i\cap\sigma(F_j)\neq \emptyset\big)\leq \bP\big(E_i\cap\sigma(I_j)\neq \emptyset\big)
\leq m^{-j\alpha \log r/\log m}\big/m^j=m^{-j(1+\alpha \log r/\log m)}.$$
Since $1+ \alpha \log r/\log m >0$, the left hand sum of \eqref{sum} is at most
$$  \sum_{i >1/\epsilon}\sum_{j \geq i}m^{-j(1+\alpha \log r/\log m)}
\leq c_1\sum_{i >1/\epsilon}m^{-i(1+\alpha \log r/\log m)}\leq c_2 m^{-(1+\alpha \log r/\log m)/\epsilon},$$
 where, provided that $\epsilon$ is sufficiently small, $c_1$ does not depend on $i$ and $\epsilon$ and $c_2$ does not depend on  $\epsilon$. With a similar estimate of $c_3m^{-(1+\beta\log r/\log m)/\epsilon}$ for the right hand sum of \eqref{sum} we conclude that
$$\lim _{\epsilon \to 0} \bP\big(\dim_H (E\cap\sigma(F))>\alpha +\beta + \log m/\log r - \epsilon\big)= 0.$$
\end{proof}

Nevertheless,  if  $E$ and $F$ are of positive Hausdorff measure in their dimensions the lower bound is attained with positive probability.

\begin{prop}\label{eg}
Let $E$ and $F$ be Borel subsets of $\cC^m$ and suppose that ${\mathcal H}^\alpha(E)>0$  and ${\mathcal H}^\beta(F)>0$ where $\alpha = \dim_H E$ and $\beta =\dim_H F$.
Then
\begin{equation}
\bP\bigg\{\dim_H(E\cap\sigma(F))\geq \dim_H E+\dim_H F+\frac{\log m }{\log r }\bigg\} > 0.\label{posprob}
\end{equation}
\end{prop}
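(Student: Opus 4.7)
The plan is to repeat the martingale and potential-theoretic construction from Section 4, but with Frostman measures taken at the \emph{exact} Hausdorff dimensions $\alpha = \dim_H E$ and $\beta = \dim_H F$ rather than at exponents strictly below them. We may assume $\alpha + \beta + \log m / \log r > 0$, since otherwise the stated inequality is vacuous.

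Because $\cH^\alpha(E) > 0$ and $\cH^\beta(F) > 0$, the classical Frostman Lemma supplies probability measures $\mu$ and $\nu$, compactly supported in $E$ and $F$ respectively, together with constants $c_E, c_F > 0$ such that $\mu(I) \leq c_E r^{k\alpha}$ and $\nu(I) \leq c_F r^{k\beta}$ for every $I \in U_k$. This is precisely the conclusion of Lemma \ref{lem:measurebounds}, but now with $\alpha, \beta$ at their sharp values. Substituting these $\mu, \nu$ into \eqref{martdef} and \eqref{def:taulimit} produces a random Borel measure $\tau$ on $\cC^m$. The proofs of Lemma \ref{lem:boundedL2} and Proposition \ref{propmes} use only the Frostman bounds, the positivity of $\alpha + \beta + \log m / \log r$, and the martingale structure, so they apply verbatim. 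Hence $\tau$ is an $\cL^2$-bounded random measure with $\mathrm{supp}(\tau) \subseteq E \cap \sigma(F)$, $\bE(\tau(\cC^m)) = \tau_0(\cC^m) = 1$, and $\bE(\tau(\cC^m)^2) \leq c_0$, so by Paley–Zygmund the event $\Omega_1 := \{\tau(\cC^m) > 0\}$ has probability $p > 0$.

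Next, fix any sequence $s_n \nearrow \alpha + \beta + \log m / \log r$. For each $n$, Lemma \ref{lem:ptm} applies with exponent $s_n$ and gives $\bE(I_{s_n}(\tau)) < \infty$, hence $I_{s_n}(\tau) < \infty$ almost surely. The intersection $\Omega_2 := \bigcap_n \{I_{s_n}(\tau) < \infty\}$ is a countable intersection of full-measure events and therefore has probability one. On $\Omega_1 \cap \Omega_2$, the measure $\tau$ is a finite positive measure supported in $E \cap \sigma(F)$ whose $s_n$-energy is finite for every $n$, so Theorem \ref{thm:ptm} yields $\dim_H(E \cap \sigma(F)) \geq s_n$ for every $n$, hence $\dim_H(E \cap \sigma(F)) \geq \alpha + \beta + \log m / \log r$. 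Since $\bP(\Omega_1 \cap \Omega_2) = p > 0$, this proves \eqref{posprob}.

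There is no substantial new obstacle to overcome: the only change from the proof of Theorem \ref{hausineq} is that the Frostman input is produced at the exact dimensions rather than at approximations from below, and this is precisely what the positive Hausdorff measure hypotheses provide. The gain is that a single random measure $\tau$ now witnesses every exponent $s$ below the critical value simultaneously, so that the positive-probability event $\{\tau(\cC^m) > 0\}$ can be intersected with a single almost-sure event to reach the critical dimension, rather than having to accept an $\epsilon$-loss as in Theorem \ref{hausineq}.
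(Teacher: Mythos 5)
Your proof is correct and follows essentially the same route as the paper: run the Section 4 construction with Frostman measures at the exact exponents $\alpha=\dim_H E$, $\beta=\dim_H F$ (available because $\cH^\alpha(E),\cH^\beta(F)>0$), and observe that the positive-probability event $\{\tau(\cC^m)>0\}$ does not depend on the energy exponent $s$, so one can push $s$ up to the critical value. You merely make explicit (via the countable intersection over $s_n$ and Paley--Zygmund) what the paper leaves as a remark, which is a welcome clarification but not a different argument.
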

\begin{proof}
In this case, the inequalities \eqref{frost} of Lemma \ref{lem:measurebounds} hold for suitable constants $c_E$ and $c_F$ with $\alpha$ and $\beta$ are actually equal to the dimensions of $E$ and $F$. The argument of Section 4 then goes through without the need to approximate these dimensions. The probability for which \eqref{epineq} holds is  just the probability that $\tau(\cC^m)>0$ which does not depend on $\epsilon>0$, so taking $\epsilon $ arbitraily small gives \eqref{posprob}.
\end{proof}

\end{section}

\end{document}